\documentclass[oneside]{amsart}
\usepackage{enumerate,amssymb,mathtools}
\usepackage{musicography}
\usepackage{mathrsfs}
\newtheorem{thm}{Theorem}[section]
\newtheorem{coro}[thm]{Corollary}
\newtheorem{prop}[thm]{Proposition}
\newtheorem{lem}[thm]{Lemma}

\newtheorem{Claim}[thm]{Claim}
\theoremstyle{definition}
\newtheorem{defn}[thm]{Definition}
\newtheorem{question}[thm]{Question}
\newtheorem{questions}[thm]{Questions}
\newcommand{\Rset}{\mathbb{R}}
\newcommand{\Tset}{\mathbb{T}}
\newcommand{\Nset}{\omega}

\newcommand{\Cset}{2^\Nset}

\newcommand{\Pset}{\Nset^\Nset}
\newcommand{\UPset}{\Nset^{\uparrow\Nset}}

\newcommand{\eps}{\varepsilon}
\newcommand{\del}{\delta}
\newcommand{\subs}{\subseteq}

\newcommand{\upto}{{\nearrow}}

\newcommand{\rest}{{\restriction}}
\renewcommand{\leq}{\leqslant}
\renewcommand{\geq}{\geqslant}

\newcommand{\abs}[1]{\lvert#1\rvert}
\newcommand{\seq}[1]{\langle#1\rangle}
\newcommand{\seqeps}{\seq{\eps_n}}

\newcommand{\Seqeps}{\seqeps\in(0,\infty)^\Nset}

\newcommand{\si}{$\sigma$\nobreakdash-}
\newcommand{\hm}{\mathscr H}

\newcommand{\pack}{\mathscr P}
\newcommand{\gbox}{\boldsymbol{\upsilon}}
\newcommand{\ubox}{\gbox}

\newcommand{\cyl}[1]{\seq{#1}}

\DeclareMathOperator{\hdim}{\dim_{\mathsf{H}}}

\DeclareMathOperator{\non}{\mathsf{non}}
\DeclareMathOperator{\add}{\mathsf{add}}

\DeclareMathOperator{\diam}{diam}

\newcommand{\NN}{\mathcal N}

\newcommand{\MM}{\mathcal M}
\newcommand{\EE}{\mathcal E}
\newcommand{\UU}{\mathcal U}

\newcommand{\CC}{\mathcal C}

\newcommand{\addN}{\add\NN}

\newcommand{\mc}[1]{\mathcal{#1}}

\newcommand{\whm}{\boldsymbol{\lambda}}
\newcommand{\ym}{\boldsymbol{\upsilon}}
\newcommand{\grG}{\mathbb{G}}
\newcommand{\grH}{\mathbb{H}}
\DeclareMathOperator{\mult}{\boldsymbol{\mathsf{m}}}
\newcommand{\EEE}{\boldsymbol{\mc E}}
\newcommand{\AAA}{\boldsymbol{\mc A}}
\newcommand{\smz}{\ensuremath{\boldsymbol{\mathsf{Smz}}}}
\newcommand{\ssmz}{\ensuremath{{\boldsymbol{\mathsf{Smz}}^\sharp}}}
\newcommand{\sssmz}{\ensuremath{\boldsymbol{\mathsf{Smz}}^{\musDoubleSharp}}}
\newcommand{\GMS}{\ensuremath{\mathsf{GMS}^{\musDoubleSharp}}}
\newcommand{\wh}[1]{\widehat{#1}}

\newcommand{\fmany}{\forall^\infty}

\newcommand{\NA}{\mathcal{N\mkern-5mu A}}
\newcommand{\MA}{\mathcal{M\mkern-5mu A}}
\newcommand{\EA}{\mathcal{E\mkern-5mu A}}

\newenvironment{enum}{\begin{enumerate}[\rm(i)]}{\end{enumerate}}
\newenvironment{itemyze}%
{\begin{list}{\textbullet}{\setlength{\labelwidth}{1ex}\setlength{\leftmargin}{1.1em}}}%
{\end{list}}
\newenvironment{cproof}{\noindent$\boldsymbol\vdash$}%
{\hfill\qed\par}

\newcommand{\Implies}{\ensuremath{\Rightarrow}}
\begin{document}
\title
[Null-additive sets and packing measures]
{Null-additive sets and packing measures}
\author{Ond\v rej Zindulka}
\address
{Department of Mathematics\\
Faculty of Civil Engineering\\
Czech Technical University\\
Th\'akurova 7\\
160 00 Prague 6\\
Czech Republic}
\email{zindulka@mat.fsv.cvut.cz}
\urladdr{http://mat.fsv.cvut.cz/zindulka}
\subjclass[2000]{03E05,28A78,28C10,22A05}
\keywords{meager-additive, null-additive, strong measure zero, Hausdorff dimension,
packing dimension}
\thanks{%
The first author gratefully acknowledges support received from a DGAPA-PAPIIT grant IN107526
and a SECIHTI grant CBF-2025-I-898.
}
\begin{abstract}
A separable metric space has strict measure zero 
if its packing measure induced by any gauge is zero.
We provide a few characterizations of strict measure zero, look at cartesian products and 
prove that in every Polish locally compact group every set of strict measure zero 
is null-additive and that in the Cantor set, Euclidean spaces and tori 
the two notions are equivalent.
\end{abstract}
\maketitle
\section{Introduction}
\label{sec:intro}

More than a century ago Borel~\cite{MR1504785} conceived the notion of
\emph{strong measure zero}: by his definition, a metric space $X$ has
\emph{strong measure zero} (hereinafter abbreviated as \smz) if for any sequence $\seq{\eps_n}$
of positive numbers there is a cover $\{U_n\}$ of $X$
such that $\diam U_n\leq\eps_n$ for all $n$.
We mention three different characterizations of the notion:

\textbullet{}
Besicovitch proved in~\cite{MR1555386,MR1555389} that a set has \smz{}
if and only if for every gauge $\phi$ its Hausdorff $\phi$-dimensional measure is zero.

\textbullet{}
Galvin, Mycielski and Solovay~\cite{GMS}, confirming a conjecture of Prikry,
proved a rather surprising characterization of \smz{} subsets of the real line:
A set $X\subs\Rset$ has \smz{} if and only if $X$ is a \emph{Prikry set}, i.e., $X+M\neq\Rset$
for each meager set $M\subs\Rset$.
Their result has been extended to locally compact groups that admit an invariant metric
(\cite{kysiak,MR3723040,wohofskyPhD}) and further investigated
in~\cite{MR3453581,MR3707641,MR4146582}.

\textbullet{}
Galvin~\cite{MR3696064} provided a long-expected game-theoretic characterization of
\smz{} in \si totally bounded spaces.

\medskip
In the papers~\cite{MR3946667,MR4472524} a parallel theory of \emph{sharp measure zero}
 (hereafter abbreviated as \ssmz) was developed:

\textbullet{}
By one of many equivalent definitions, a set in a metric space
has \ssmz{} if for every gage $\phi$
the so called \emph{upper Hausdorff} $\phi$-dimensional \emph{measure} is zero.
The Upper Hausdorff measure is defined just like the Hausdorff measure except that it is defined from
finite covers, instead of countable covers; see~\cite{MR3946667} for details.

\textbullet{}
In locally compact metric groups admitting an invariant metric, \ssmz{}
is equivalent with \emph{meager-additivity}: a set $X\subs\Rset$ is \emph{meager-additive}
if $X+M$ is meager for every meager set $M\subs\Rset$;
and the notion easily extends to topological groups. This is clearly an analogue
of the mentioned Galvin-Mycielski-Solovay Theorem.

\textbullet{}
A characterization of \ssmz{} (and thus meager-additivity) in terms of
a game is provided.

\medskip
So \smz{} and \ssmz{} have strikingly similar characterizations, in particular in terms of
fractal measures. One may, just out of curiosity, ask what results from replacing Hausdorff
or upper Hausdorff measures with some other fractal measures, e.g., packing measures.
So let's say that a set in a metric space has \emph{strict measure zero} (abbreviated as \sssmz)
if for every gauge $\phi$ its packing measure induced by $\phi$ is zero.
To my surprise, it turns out that

\textbullet{}
a set $X\subs\Rset$ has \sssmz{} if and only if it is  \emph{null-additive},
i.e., $X+N$ is Lebesgue null for every Lebesgue null set $N\subs\Rset$,
so there indeed is a Galvin-Mycielski-Solovay Theorem-like characterization of null-additive
sets.
We are actually able to prove this characterization in the Cantor set and Euclidean spaces.
We also show that in any locally compact metric group every \sssmz{} set is null-additive.

\textbullet{}
There is a also characterization of \sssmz{} in terms of a game that parallels that of
Galvin~\cite{MR3696064}

\textbullet{}
and also a characterization of \sssmz{} that follows the pattern of
the original Borel's definition of \smz{}.

\smallskip
Say that a locally compact Polish group $\grG$ is \GMS{} if every null-additive set in $\grG$ is \sssmz{}.
We prove that the Cantor set, the real line, Euclidean spaces and $n$-dimesional tori are \GMS{} groups
and that  every set in a \GMS{} group with an invariant metric satisfies the following diagram.
\begin{center}\renewcommand{\arraystretch}{1.4}
\newcommand{\lra}{\Longrightarrow}
\newcommand{\uda}{\Updownarrow}
\newcommand{\da}{\Downarrow}
\newcommand{\ua}{\Uparrow}
\begin{tabular}{ccccccc}
\sssmz         & $\lra$ & \ssmz           & $\lra$ & \smz\\
$\uda$         &        & $\uda$          &        &$\uda$\\
null-additive  & $\lra$ & meager-additive & $\lra$ & Prikry
\end{tabular}
\end{center}
Note that null-additive \Implies{} meager-additive in the Cantor set is due to Shelah~\cite{MR1324470},
\ssmz{} $\Leftrightarrow$ meager-additive in locally compact groups with an invariant metric to
Zindulka~\cite{MR4472524} and \smz{} $\Leftrightarrow$ Prikry in locally compact groups 
to Kysiak~\cite{MR4472524}, and some implications survive  in a wider scope, 
see~\cite{MR3453581,MR3707641,MR4472524}.

\smallskip
Section~\ref{sec:fractal} recalls Hausdorff and packing measures. 
Section~ref{sec:sssmz} provides the definition of strict measure zero and a few equivalent definitions.
We also show here that a product of strict measure zero sets has strict measure zero.
In Section~\ref{sec:galvin} we prove a game-theoretic characterization of strict measure zero.
In Section~\ref{sec:nadd} we show that in the groups of reals ---
Cantor set, the line and circle --- strict measure zero is equivalent with 
null-additivity, providing thus an analogue to Galvin-Mycielski-Solovay Theorem for null-additive sets
in these groups.
In Section~\ref{sec:nadditive} we prove that the implication strict measure zero \Implies{} null-additive
remains valid in every locally compact Polish group. Then we define a locally compact group to be \GMS{} 
if each its null-additive subset has strict measure zero and notice that the class of \GMS{} groups
is closed under finite Cartesian products. It follows that all Euclidean spaces and tori are \GMS{}.
We also prove that in a \GMS{} group, every null-additive set is meager-additive. 
Another, maybe surprising, result is that if $X\subs\Rset$ is a null-additive set,
then $\hm^s(X+N)=0$ whenever $\hm^s(N)=0$ holds for any $s>0$, so null-additivity 
w.r.t.~Lebesgue measure yields null-additivity w.r.t~all Hausdorff measures; 
and the result extends to all \GMS{} groups and gauges.
In Section~\ref{sec:rem} we look at a few related topics, and ask a few questions. 
The question of major importance is if all ocally compact Polish groups are \GMS{}.

\subsection*{Borel Conjecture}
Borel conjectured that all \smz{} sets are countable.
It is well-known that Borel Conjecture is independent of ZFC, the usual axioms of set theory.
The proof of the consistency of its failure was settled by 1940
by Sierpi\'nski~\cite{sierpinski}, who proved in 1928 that the Continuum Hypothesis yields
an uncountable \smz{} set, and G\"odel~\cite{MR0002514}, who proved in 1940
the consistency of the Continuum Hypothesis.
The consistency of the Borel Conjecture remained open until 1976
when Laver proved in his ground-breaking paper~\cite{MR0422027}
Borel Conjecture to be indeed consistent with ZFC.
Carlson~\cite{MR1139474} wraped up in 1993 by proving that the Borel Conjecture
implies that every separable \smz{} metric space is countable.

So all of the three notions -- \smz, \ssmz{} and \sssmz{} -- consistently represent countable
sets, so the whole discussion is void. On the other hand, as proved in~\cite[Theorem 8]{MR738943}, 
Continuum Hypothesis yields uncountable \emph{strong $\gamma$-sets} that are trivially \sssmz{}, 
cf.~Section~\ref{sec:rem}.

Some common notation used throughout the paper includes $\abs{A}$ for
the cardinality of a set $A$, $\Nset$ for the set of natural numbers,
$[\Nset]^\Nset$ for the collection of infinite
subsets of $\Nset$,
and $\UPset$ for the family of nondecreasing
unbounded sequences of natural numbers.

\section{Hausdorff and packing measure}
\label{sec:fractal}

Let $X$ be a metric space. The diameter of a set $A\subs X$ is denoted by $\diam A$.
A family $\mc E$ of sets is \emph{$\del$-fine} if $\forall E\in\EE\ \diam E\leq\del$.
A closed ball of radius $r$ centered at $x$ is denoted $B(x,r)$.
We also write $B(A,r)=\bigcup_{x\in A}B(x,r)$.

A function $\phi\colon[0,\infty)\to[0,\infty)$ is a \emph{gauge} if it is non-decreasing,
right-continuous and $\phi(r)=0$ iff $r=0$.
The following is the common ordering of gauges, cf.~\cite{MR0281862}:
$$
  \phi\prec\psi\quad \overset{\mathrm{def}}{\equiv}
  \quad\lim_{r\to0+}\frac{\psi(r)}{\phi(r)}=0.
$$
If there is $s>0$ such that $\phi(r)=r^s$, then we identify $\phi$ with $s$.

\subsection*{Hausdorff measure}
Given a gauge $\phi$, we first define for each $\del>0$
\[
  \hm_\del^\phi(E)=\inf\Bigl\{\sum_{n\in\Nset}\phi(\diam E_n):
  \{E_n\}\text{ is a $\del$-fine cover of $E$}\Bigr\}
\]
and then define the \emph{Hausdorff measure induced by $\phi$} to be
$\hm^\phi(E)=\sup_{\del>0}\hm_\del^\phi(E)$.
In the common case when $\phi(r)=r^s$ for some
$s>0$ we write $\hm^s$ for $\hm^\phi$, and the same license is used for the packing
measure and box-counting submeasure and their premeasures defined \emph{infra}.

\subsection*{Packing measure}
Say that a family $\{(x_i,r_i):i\in I\}\subs E\times(0,\infty)$ is a \emph{packing of $E$}
if $x_j\notin B(x_i,r_i)$ for any two distinct $i, j\in I$. The packing is $\del$-fine if
$r_i\leq\del$ for all $i\in I$.
Given a gauge $\phi$, we first define for each $\del>0$
\begin{align*}
  \pack_\del^\phi(E)&=\sup\Bigl\{\sum_{n\in\Nset}\phi(2r_i):
  \{(x_i,r_i):i\in I\}\text{ is a $\del$-fine packing of $E$}\Bigr\},\\
  \pack_0^\phi(E)&=\inf_{\del>0}\pack_\del^\phi(E)
  \shortintertext{and then, since $\pack_0^\phi$ is not a measure,}
  \pack^\phi(E)&=\inf\Bigl\{\sum_{n\in\Nset}\pack_0^\phi(E_n):
    \text{ $\{E_n\}$ is a cover of $E$}\Bigr\}.
\end{align*}
The set function $\pack^\phi$ is called a \emph{packing measure induced by $\phi$}.
It is, just like Hausdorff measure, an outer measure whose restriction to Borel sets
is a Borel measure on $X$.

\subsection*{Box-counting submeasure}
The following notion, while in use, is not as well-known as Hausdorff and packing measures.
It is easier to deal with than packing measure and can often replace it.
The \emph{covering number function} $N_E(\del)$ of $E$ is defined
as the minimal size of the $\del$-fine cover of $E$.
Given a gauge $\phi$, we first define for each $\del>0$
\begin{align*}
  \ubox_\del^\phi(E)&=\phi(\del)N_E(\del),
  \quad\ubox_0^\phi(E)=\limsup_{\del\to0}\ubox_\del^\phi(E)
  \shortintertext{and then}
  \ubox^\phi(E)&=\inf\Bigl\{\sum_{n\in\Nset}\ubox_0^\phi(E_n):
    \text{ $\{E_n\}$ is a cover of $E$}\Bigr\}.
\end{align*}
The set function $\ubox^\phi$ is called a box-counting submeasure induced by $\phi$.
It is \si sub\-additive, but rarely \si additive.
%
Here are some elementary facts about the three fractal (sub)measures.
The proofs are straightforward.
\begin{lem}\label{basix}
\begin{enum}
\item $\hm^\phi\leq\ubox^\phi\leq\pack^\phi$ for any gauge $\phi$.
\item If $\alpha>1$, then $\pack_0^\phi\leq\ubox_0^{\phi^\alpha}$ and
$\pack^\phi\leq\ubox^{\phi^\alpha}$ for any continuous gauge $\phi$.
\item If $\ubox^\phi(X)<s$, then there is a sequence $X_n\upto X$ such that
$\sup\ubox_0^\phi(X_n)<s$; and likewise for $\pack^\phi$.
\end{enum}
\end{lem}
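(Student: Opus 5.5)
Each of the three items comes down to a comparison at the level of the premeasures, $\hm^\phi_\del$, $\ubox_0^\phi$ and $\pack_0^\phi$. From such a comparison I pass to the measures by taking infima over countable covers.

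\emph{Item (i).} For $\hm^\phi\leq\ubox^\phi$, fix $E$. For each small $\del$ there is a $\del$-fine cover of $E$ with $N_E(\del)$ members, each of diameter at most $\del$. Hence $\hm^\phi_\del(E)\leq\phi(\del)N_E(\del)=\ubox^\phi_\del(E)$. Letting $\del\to0$ gives $\hm^\phi(E)\leq\limsup_{\del\to0}\ubox_\del^\phi(E)=\ubox_0^\phi(E)$. Since $\hm^\phi$ is \si subadditive, for any cover $\{E_n\}$ of $E$ we get $\hm^\phi(E)\leq\sum\hm^\phi(E_n)\leq\sum\ubox_0^\phi(E_n)$, and taking the infimum over covers yields $\hm^\phi\leq\ubox^\phi$.

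For $\ubox^\phi\leq\pack^\phi$ it suffices to show $\ubox_0^\phi(E)\leq\pack_0^\phi(E)$, and then take infima over covers. Take a maximal set $\{x_i\}\subs E$ whose points are pairwise more than $\del/2$ apart. I expect it to be finite whenever $\pack_0^\phi(E)<\infty$; if it is infinite, the packing sum is already infinite. The balls $B(x_i,\del/2)$ then cover $E$, and they are $\del$-fine. So $N_E(\del)\leq$ the number of points. The pairs $(x_i,\del/2)$ form a $\del/2$-fine packing, since distinct centres are more than $\del/2$ apart. Therefore $\phi(\del)N_E(\del)\leq\pack^\phi_{\del/2}(E)$. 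Taking $\limsup_{\del\to0}$ on the left and noting that $\pack^\phi_\eta$ is monotone in $\eta$ gives the claim.

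\emph{Item (ii).} This is the main obstacle. A packing with radii of mixed sizes must be controlled by covering numbers at many scales. The plan is to sort the radii dyadically. Fix a packing $\{(x_i,r_i)\}$ of $E$ with $r_i\leq\del$, and group the indices by $2r_i\in(2^{-k-1},2^{-k}]$. Within group $k$ the centres are pairwise at distance greater than $2^{-k-2}$. A set of diameter at most $2^{-k-2}$ therefore contains at most one centre from the group, so group $k$ has at most $N_E(2^{-k-2})$ members. If $\ubox_0^{\phi^\alpha}(E)<c$, then for small $\del$ and the relevant $k$ we have $N_E(2^{-k-2})\leq c/\phi^\alpha(2^{-k-2})$. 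The contribution of group $k$ is then at most $c\,\phi(2^{-k})/\phi(2^{-k-2})^\alpha$.

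Here I need $\phi(2^{-k})\leq C\phi(2^{-k-2})$, a doubling-type condition, to get a bound of the form $cC\phi(2^{-k-2})^{\alpha-1}$. I then need this to be summable in $k$, with sum tending to $0$ as $\del\to0$. This fails for general continuous gauges. I would try to avoid it by using the continuity of $\phi$: instead of dyadic scales, choose scales $t_k$ with $\phi(t_{k+1})=\phi(t_k)/2$, so that the values $\phi(t_k)^{\alpha-1}$ decay geometrically. The contribution of the annulus $t_{k+1}<2r_i\leq t_k$ is then bounded using $N_E(t_{k+1}/2)$, and the step I expect to be delicate is matching $\phi(t_{k+1}/2)$ with $\phi(t_{k+1})$.

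The outcome would be $\pack_0^\phi(E)\leq\ubox_0^{\phi^\alpha}(E)$, which is what (ii) asserts. Taking infima over covers then gives $\pack^\phi\leq\ubox^{\phi^\alpha}$.

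\emph{Item (iii).} Suppose $\ubox^\phi(X)<s$. Choose a cover $\{E_n\}$ with $\sum\ubox_0^\phi(E_n)<s$, and put $X_n=X\cap\bigcup_{k\leq n}E_k$, so that $X_n\upto X$. The premeasure $\ubox_0^\phi$ is finitely subadditive, because $N_{A\cup B}(\del)\leq N_A(\del)+N_B(\del)$ and $\limsup$ is subadditive. Hence $\ubox_0^\phi(X_n)\leq\sum_{k\leq n}\ubox_0^\phi(E_k)$, and therefore $\sup_n\ubox_0^\phi(X_n)<s$. The same argument applies to $\pack^\phi$, using that $\pack_0^\phi$ is finitely subadditive: a packing of $A\cup B$ splits into a packing of $A$ and a packing of $B$.
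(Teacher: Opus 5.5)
Your arguments for (i) and (iii) are correct; the paper itself gives no proof of this lemma. Item (ii), however, cannot be proved as printed, because it is false. For $\alpha>1$ we have $\phi^\alpha\leq\phi$ near $0$, so $\ubox_0^{\phi^\alpha}\leq\ubox_0^\phi\leq\pack_0^\phi$: the inequality points the wrong way. Concretely, take $\Cset$ with the least-difference metric, $\phi(r)=r$ and $\alpha=2$. Then $N_{\Cset}(\del)=2^n$ for $2^{-n}\leq\del<2^{1-n}$, so $\ubox_0^{2}(\Cset)=0$. On the other hand, choosing one point from each cone $\cyl{p}$, $p\in 2^n$, each with radius $2^{-n}$, gives a packing with sum $2^n\cdot 2^{1-n}=2$, so $\pack_0^1(\Cset)\geq 2$. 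The measure version fails as well, since $\pack^1(\Cset)\geq\hm^1(\Cset)=1$ by (i). Your own estimate already signals the failure. Because $\phi(2^{-k})\geq\phi(2^{-k-2})$, your group-$k$ bound $c\,\phi(2^{-k})/\phi(2^{-k-2})^\alpha$ is at least $c\,\phi(2^{-k-2})^{1-\alpha}\to\infty$; the exponent $\alpha-1$ you wrote should be $1-\alpha$.

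What is true, and all that Lemma~\ref{upsilon} needs, is the reverse comparison, with a larger gauge on the box side. Even for that, your plan is missing two ingredients.

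First, bounding each dyadic group separately by the covering number at its own scale gives terms that need not be summable. For $\theta(r)=1/\log(1/r)$ with box gauge $\theta^{1/2}$, they are of order $k^{-1/2}$. Use instead that all centres with $r_i>2^{-K-1}$ are pairwise more than $2^{-K-1}$ apart. Hence there are at most $N_E(2^{-K-1})$ of them in total, and you can sum by parts.

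Second, without a doubling condition a shift of scale is unavoidable, so choosing scales $t_k$ with $\phi(t_{k+1})=\phi(t_k)/2$ cannot close the gap. Consider $E=\prod_{j\geq1}m_j$ with $d(x,y)=4^{-j}$, where $j$ is the first coordinate in which $x,y$ differ, and put $M_j=m_1\cdots m_j$. Take a continuous gauge with $\phi(4^{-j})=1/(jM_j)$ and $\phi(2\cdot 4^{-j})=M_j^{-1/\alpha}$; this is possible once $M_{j+1}\geq(jM_j)^\alpha$. Then $\ubox_0^\phi(E)=0$ but $\pack_0^{\phi^\alpha}(E)\geq 1$, so even $\pack_0^{\phi^\alpha}\leq\ubox_0^\phi$ fails.

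A correct version is the following. For a gauge $\theta$, put $\psi(r)=\theta(4r)^{1/\alpha}$, and suppose $\psi(\eta)N_E(\eta)<c$ for small $\eta$. Summation by parts with $b_k=\theta(2^{1-k})$ gives $\pack_\del^\theta(E)\leq\frac{c\alpha}{\alpha-1}\theta(4\del)^{1-1/\alpha}$. Hence $\pack_0^\theta(E)=0$ whenever $\ubox_0^\psi(E)<\infty$, and therefore $\pack^\theta(X)=0$ whenever $\ubox^\psi(X)=0$.
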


\subsection*{Cartesian products}
Given two metric spaces $X_1$ and $X_2$ with respective metrics
$d_1$ and $d_2$, provide the cartesian product $X_1\times X_2$
with the maximum metric
\[
  d\bigl((x_1,x_2),(y_1,y_2)\bigr)=\max(d_1(x_1,y_1),d_2(x_2,y_2)).
\]

\subsection*{The measures on the Cantor cube}
For $p\in2^{<\Nset}$ we denote $\cyl p=\{x\in\Cset:p\subs x\}$ the cone determined by $p$.
Metrize $\Cset$ by the standard least difference metric: given $x\neq y\in\Cset$,
set $\Delta(x,y)=\min\{i\in\Nset:x(i)\neq y(i)\}$ and define
$d(x,y)=2^{-\Delta(x,y)}$.
The topology induced by $d$ clearly coincides with that of $\Cset$.

Routine proofs show that in this metric, $\hm^1$ coincides on Borel sets with
the usual product measure, i.e., the Haar measure of the compact group $\Cset$,
and that $\hm^1(\Cset)=\ubox^1(\Cset)=1$.

Besides the \si ideals $\NN$ of Haar null sets and $\MM$ of meager sets, we also
make use of the \si ideal $\EE$ generated by closed null sets.
%


\section{Strict measure zero}
\label{sec:sssmz}

Recall that 
Besicovitch~\cite{MR1555386} proved that a set $X$ has strong measure zero
if and only if $\hm^\phi(X)=0$ for every gauge $\phi$ and a similar characterization
of meager-additive sets in terms of fractal measures was provided
in~\cite{MR3946667,MR4472524}. The following definition follows the same pattern.

\begin{defn}
A set $E$ in a metric space is said to have \emph{strict measure zero}
if $\pack^\phi(E)=0$ for every gauge $\phi$. Strict measure zero is abbreviated as \sssmz{}.%
\footnote{Strong measure zero is abbreviated as \smz{}, e.g., in~\cite{MR3453581,MR4146582}
and sharp measure zero as \ssmz{} in~\cite{MR3946667,MR4472524,MR4146582}.}
The family of all \sssmz{} sets in a metric space $X$ is denoted by $\sssmz(X)$.
\end{defn}
It is immediate that $\sssmz(X)$ is a \si ideal.
\begin{prop}\label{unifcont}
A uniformly continuous image of a \sssmz{} set is \sssmz{}.
\end{prop}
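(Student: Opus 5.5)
The plan is to avoid working with packings directly, since images of packings under $f$ need not be packings. Instead I will pass through the box-counting submeasure $\ubox$, where uniformly continuous maps behave well because covers are pushed forward to covers. Lemma~\ref{basix} provides the two comparisons needed: $\ubox^\phi\leq\pack^\phi$ from (i), and $\pack^\phi\leq\ubox^{\phi^\alpha}$ for continuous $\phi$ and $\alpha>1$ from (ii).

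\emph{First step: reduce to box-counting.} Let $X$ be \sssmz{}, let $f\colon X\to Y$ be uniformly continuous, and let $\psi$ be any gauge.
\begin{itemize}
\item I first dominate $\psi$ by a continuous gauge. A natural choice is $\chi(r)=\int_1^2\psi(rt)\,dt$. It satisfies $\chi\geq\psi$ because $\psi$ is nondecreasing. It is nondecreasing, it vanishes only at $0$, and it is continuous because $\psi$ is monotone, so it has only countably many jumps and dominated convergence applies.
\item Then $\pack^\psi(f(X))\leq\pack^\chi(f(X))\leq\ubox^{\chi^2}(f(X))$ by Lemma~\ref{basix}(ii) with $\alpha=2$.
\item So it suffices to show the following: if $\ubox^\phi(X)=0$ for every gauge $\phi$, then $\ubox^{\theta}(f(X))=0$ for every gauge $\theta$.
\item The hypothesis is available, since $\ubox^\phi(X)\leq\pack^\phi(X)=0$ by Lemma~\ref{basix}(i).
\end{itemize}

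\emph{Second step: transfer covers.} Choose a modulus of continuity for $f$. Let $\omega_0(\delta)=\sup\{d(f(x),f(y)):d(x,y)\leq\delta\}$, which tends to $0$ as $\delta\to0$. Replace it by a continuous, strictly increasing majorant $\omega$ with $\omega(0)=0$, for instance $\omega(\delta)=\delta+\frac1\delta\int_\delta^{2\delta}\omega_0$, adjusted if necessary.
\begin{itemize}
\item A $\delta$-fine cover of $E\subs X$ with $N$ elements maps to an $\omega(\delta)$-fine cover of $f(E)$ with $N$ elements. Hence $N_{f(E)}(\omega(\delta))\leq N_E(\delta)$.
\item Given a gauge $\theta$, set $\phi=\theta\circ\omega$. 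This is a gauge: it is nondecreasing, right-continuous as the composition of a continuous increasing map with a right-continuous one, and zero only at $0$.
\item Then $\ubox^\theta_{\omega(\delta)}(f(E))\leq\ubox^\phi_\delta(E)$.
\item Since $\omega$ is a homeomorphism of a neighbourhood of $0$ onto a neighbourhood of $0$, taking $\limsup$ gives $\ubox^\theta_0(f(E))\leq\ubox^\phi_0(E)$.
\end{itemize}

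\emph{Third step: pass to the submeasure.} Apply the previous inequality to every piece $E_n$ of a cover $\{E_n\}$ of $X$. Since $\{f(E_n)\}$ covers $f(X)$, taking the infimum over covers yields $\ubox^\theta(f(X))\leq\ubox^\phi(X)=0$. Combined with the first step, this gives $\pack^\psi(f(X))=0$ for every gauge $\psi$, so $f(X)$ is \sssmz{}.

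The main obstacle, in my view, is the technical bookkeeping about gauges: producing a continuous gauge above an arbitrary $\psi$ so that Lemma~\ref{basix}(ii) applies, and making $\omega$ continuous and strictly increasing so that the $\limsup$ substitution $\eps=\omega(\delta)$ is legitimate. The transfer of covers itself is routine.
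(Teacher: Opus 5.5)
Your argument is correct, but it takes a different route from the paper. The paper stays with packing measures and proves the proposition in one line, importing the transfer inequality $\pack^\phi(f(X))\leq\pack^{\phi\circ g}(X)$ for a modulus $g$ of $f$.

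You instead pass to the box-counting submeasure. There the transfer is immediate because covers push forward: $N_{f(E)}(\omega(\del))\leq N_E(\del)$. You then use the equivalence between \sssmz{} and the vanishing of every $\ubox^\phi$ at both ends. Your bookkeeping is sound:
\begin{enumerate}
\item a continuous, strictly increasing $\omega$ makes $\theta\circ\omega$ a gauge;
\item it also justifies the substitution $\eps=\omega(\del)$ in the $\limsup$;
\item the inequality $\ubox_0^\theta(f(E))\leq\ubox_0^{\theta\circ\omega}(E)$ passes to the infimum over countable covers.
\end{enumerate}
Your version is self-contained given the paper's own lemmas. The paper's is shorter but rests on an external fact.

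Your opening worry (that images of packings need not be packings) does not actually block the direct route, because one pulls packings back rather than pushing them forward. Let $\{(y_i,r_i)\}$ be a packing of $f(E)$ and pick $x_i$ with $f(x_i)=y_i$. If $d(x_i,x_j)\leq\omega^{-1}(r_i)$, then $d(y_i,y_j)\leq r_i$, which is impossible. So $\{(x_i,\omega^{-1}(r_i))\}$ is a packing of $E$. This gives $\pack^\phi(f(X))\leq\pack^{\phi'}(X)$ with $\phi'(t)=\phi(2\omega(t/2))$, which suffices since all gauges are quantified.

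One correction to your first step: cite Lemma~\ref{upsilon} directly instead of the chain $\pack^\chi\leq\ubox^{\chi^2}$ from Lemma~\ref{basix}(ii).
\begin{enumerate}
\item Lemma~\ref{upsilon} is exactly the two-sided reduction you use, so the continuous majorant $\chi$ becomes superfluous.
\item The inequality $\pack^\chi\leq\ubox^{\chi^2}$ cannot hold as literally written. Take $\chi(r)=r$ and $E=[0,1]$. Then $\ubox^2(E)\leq\limsup_{\del\to0}\del^2\lceil1/\del\rceil=0$, while $\pack^1(E)\geq\hm^1(E)=1$ by Lemma~\ref{basix}(i).
\end{enumerate}
The equivalence in Lemma~\ref{upsilon} itself is sound. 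Given $\psi$, pick $\rho_m\searrow0$ with $\psi(2\rho_m)\leq4^{-m}$ and let $\theta=2^{-m-1}$ on $[\rho_{m+1},\rho_m)$. Packing centres with radii in $[\rho_{m+1},\rho_m)$ are $\rho_{m+1}$-separated, so there are at most $N_E(\rho_{m+1})\lesssim2^{m}$ of them, each contributing at most $4^{-m}$. Hence $\ubox_0^\theta(E)<\infty$ forces $\pack_0^\psi(E)=0$. With that citation your proof is complete.
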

This follows easily from the well-known fact that if $f$ is uniformly continuous
and $g$ is its modulus, then $\pack^\phi(f(X))\leq\pack^{\phi{\circ}g}(X)$ for any gauge
$\phi$, see, e.g., \cite[Theorem 29]{MR0281862}.

In this section we establish a few characterizations of \sssmz{} in terms of covers 
and some properties of products.
The following lemma will turn handy. It follows at once from Lemma~\ref{basix}(i) and (ii).
\begin{lem}\label{upsilon}
A set $X$ has \sssmz{} if and only if $\ubox^\phi(X)=0$ for every gauge $\phi$.
\end{lem}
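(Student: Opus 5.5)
Both directions follow from Lemma~\ref{basix}; the only care needed is that Lemma~\ref{basix}(ii) assumes the gauge is continuous, so we first pass to continuous gauges.

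For the forward implication, assume $X$ has \sssmz{} and let $\phi$ be any gauge. Then $\pack^\phi(X)=0$. By Lemma~\ref{basix}(i), $\ubox^\phi\leq\pack^\phi$, so $\ubox^\phi(X)=0$.

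For the converse, assume $\ubox^\phi(X)=0$ for every gauge $\phi$, and fix a gauge $\phi$. We want $\pack^\phi(X)=0$, and we proceed in three steps.

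\emph{Step 1: dominate $\phi$ by a continuous gauge.} We choose a continuous gauge $\psi$ with $\phi\leq\psi$ near $0$. For example, put $\psi(r)=\frac1r\int_r^{2r}\phi(t)\,dt$ for small $r>0$ and $\psi(0)=0$. Since $\phi$ is nondecreasing, $\psi(r)\geq\phi(r)$ and $\psi(r)\leq\phi(2r)\to0$. The function $\psi$ is continuous for $r>0$ and positive for $r>0$. To make $\psi$ nondecreasing as well, replace it by $\tilde\psi(r)=\sup_{t\leq r}\psi(t)$, which is still continuous, still satisfies $\tilde\psi\geq\phi$, and still tends to $0$ as $r\to0$. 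Outside a neighbourhood of $0$ we modify $\tilde\psi$ freely to keep it a gauge; this is harmless, because packing and box premeasures only involve arbitrarily small radii. Checking this continuity and monotonicity is routine and is the only real obstacle in the argument.

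\emph{Step 2: compare packing measures.} Since $\phi\leq\psi$ near $0$, every $\del$-fine packing sum for $\phi$ is bounded by the corresponding sum for $\psi$ once $\del$ is small. Hence $\pack_0^\phi\leq\pack_0^\psi$, and therefore $\pack^\phi\leq\pack^\psi$.

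\emph{Step 3: apply Lemma~\ref{basix}(ii).} Fix any $\alpha>1$, say $\alpha=2$. Then $\psi^\alpha$ is again a gauge, and $\psi$ is continuous, so Lemma~\ref{basix}(ii) gives $\pack^\psi(X)\leq\ubox^{\psi^\alpha}(X)$. The right-hand side is $0$ by hypothesis. Combining this with Step 2,
\[
\pack^\phi(X)\leq\pack^\psi(X)\leq\ubox^{\psi^\alpha}(X)=0.
\]
Since $\phi$ was an arbitrary gauge, $X$ has \sssmz{}.
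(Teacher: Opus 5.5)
Your argument is the paper's own: Lemma~\ref{basix}(i) for the forward direction and Lemma~\ref{basix}(ii) for the converse. Your continuous majorant correctly meets the continuity hypothesis the paper passes over; in fact $\psi(r)=\int_1^2\phi(rs)\,ds$ is already nondecreasing, so the running supremum is unnecessary.

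The real obstacle is not continuity, though. The inequality you invoke in Step~3 is false as printed. For $\alpha>1$ we have $\psi^\alpha\leq\psi$ near $0$, so (i) and (ii) together would force $\ubox^{\psi^\alpha}=\ubox^\psi=\pack^\psi$. Concretely, take $X=[0,1]\subs\Rset$, $\psi(r)=r^{3/4}$ and $\alpha=2$. Then $\ubox^{3/2}([0,1])\leq\limsup_{\del\to0}\del^{3/2}\lceil 1/\del\rceil=0$, whereas $\pack^{3/4}([0,1])\geq\hm^{3/4}([0,1])=\infty$. So Step~3, and with it the paper's one-line proof, derives $\pack^\psi(X)=0$ from the vanishing of the box measure for a \emph{smaller} gauge. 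That implication is invalid; the hypothesis must be applied to a \emph{larger} gauge.

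Here is a repair that needs neither continuity nor any doubling condition. Given $\phi$, apply the hypothesis to the gauge $\psi(\rho)=\phi(2\rho)^{1/2}$. This gives $X=\bigcup_nX_n$ with $\ubox_0^\psi(X_n)<\infty$, i.e.\ $N_{X_n}(\rho)\leq C_n\phi(2\rho)^{-1/2}$ for $\rho<\eta_n$. Let $\{(x_i,r_i)\}$ be a $\del$-fine packing of $X_n$ with $\del<\eta_n$. Then $d(x_i,x_j)>\max(r_i,r_j)$ for $i\neq j$. Hence for $t\in(0,2\del]$ the centres with $2r_i\geq t$ are pairwise more than $t/2$ apart, and there are at most $N_{X_n}(t/2)\leq C_n\phi(t)^{-1/2}$ of them. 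Writing $\phi(2r_i)$ as the Lebesgue--Stieltjes $d\phi$-measure of $(0,2r_i]$,
\[
  \sum_i\phi(2r_i)=\int_{(0,2\del]}\#\{i:2r_i\geq t\}\,d\phi(t)
  \leq C_n\int_{(0,2\del]}\phi(t)^{-1/2}\,d\phi(t)\leq 2C_n\sqrt{\phi(2\del)},
\]
which tends to $0$ with $\del$. Hence $\pack_0^\phi(X_n)=0$ for every $n$, and so $\pack^\phi(X)=0$. The factor $2$ inside $\psi$ absorbs the mismatch between the packing radii and the scale at which the centres are counted. With this fix your Step~1 is no longer needed.
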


\subsection*{Products}
We provide a few facts about \sssmz{} in cartesian products.
\begin{prop}\label{prodPnull}
If $X$ is \sssmz{}, then
\begin{enum}
\item $\pack^\phi(X\times Y)=0$ whenever $\pack_0^\phi(Y)=0$,
\item $\hm^\phi(X\times Y)=0$ whenever $\hm^\phi(Y)=0$.
\end{enum}
\end{prop}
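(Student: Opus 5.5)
The plan is to reduce both parts to estimates on covering numbers. The only earlier facts used are Lemma~\ref{upsilon} (\sssmz{} means $\ubox^\chi$-null for every gauge $\chi$) and Lemma~\ref{basix}(ii),(iii). Let $M_Y(r)$ be the largest number of points of $Y$ at mutual distances $>r$. Two elementary facts will be used throughout:
\begin{itemyze}
\item a maximal such family gives a cover of $Y$ by sets of diameter $\leq 2r$, so $N_Y(2r)\leq M_Y(r)$, and it is also an $r$-fine packing, so $\phi(2r)M_Y(r)\leq\pack^\phi_\del(Y)$ for $r\leq\del$;
\item with the maximum metric, $N_{X\times Y}(r)\leq N_X(r)N_Y(r)$.
\end{itemyze}
The strategy is to decompose $X$, using its \sssmz{} property for one suitably chosen gauge $\chi$, into pieces $X_n$ whose covering numbers grow so slowly that $N_{X_n}(r)$ is absorbed by the decay that $Y$ provides.

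For (i), put $h(r)=\sup_{t\leq r}\phi(2t)M_Y(t)$. The hypothesis $\pack^\phi_0(Y)=0$ gives $h(r)\to0$, and $h$ is nondecreasing. Fix $\alpha>1$. By Lemma~\ref{basix}(ii) it suffices to show $\ubox^{\phi^\alpha}_0(X_n\times Y)=0$ for all pieces of a suitable cover $X=\bigcup_n X_n$. For this we estimate
\[
  \phi^\alpha(r)N_{X_n\times Y}(r)\leq \phi(r)^{\alpha-1}N_{X_n}(r)\cdot\phi(r)M_Y(r/2)\leq N_{X_n}(r)\,\phi(r)^{\alpha-1}h(r/2).
\]
Let $\chi$ be a gauge with $\chi\geq(\phi^{\alpha-1}h(\cdot/2))^{1/2}$, say the right-continuous nondecreasing majorant of this function plus $r$. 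Since $X$ is \sssmz{}, Lemma~\ref{upsilon} and Lemma~\ref{basix}(iii) give a cover $X_n\upto X$ with $\chi(r)N_{X_n}(r)\leq1$ for small $r$. Hence the right-hand side is at most $\chi(r)\to0$, so $\ubox_0^{\phi^\alpha}(X_n\times Y)=0$ and therefore $\pack^\phi(X\times Y)=0$.

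The continuity assumption in Lemma~\ref{basix}(ii) is a technicality. I would replace $\phi$ by the continuous gauge $\bar\phi(r)=\frac2r\int_{r/2}^r\phi$, which satisfies $\phi(r/2)\leq\bar\phi(r)\leq\phi(r)$. The hypothesis on $Y$ transfers to $\bar\phi$, and $\pack^{\phi}\leq\pack^{\bar\phi(2\cdot)}$, so this only shifts all scales by a factor $2$. I expect this bookkeeping to be the only fiddly point in (i).

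For (ii) the same scheme works with Hausdorff covers, but there is no uniform rate available for $Y$, so first I need a rate. The main step, and the one I expect to be the obstacle, is to show that $\hm^\phi(Y)=0$ yields a gauge $\psi$ with $\psi/\phi\to\infty$ and still $\hm^\psi(Y)=0$. I would try the standard diagonal construction. Choose $\del_k\downarrow0$ and covers $\mathcal C_k$ of $Y$ with $\sum_{E\in\mathcal C_k}\phi(\diam E)<4^{-k}$ and every $E\in\mathcal C_k$ of diameter in $(\del_{k+1},\del_k]$; sets of diameter $0$ are handled separately. Then put $\psi=2^{k}\phi$ on $(\del_{k+1},\del_k]$, which is nondecreasing after replacing it by its monotone majorant and keeping $\del_k$ sparse enough. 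This gives $\sum_{E\in\mathcal C_k}\psi(\diam E)<2^{-k}$. Next, let $\chi$ be a gauge with $\chi\geq\phi/\psi$, which tends to $0$. By Lemma~\ref{upsilon} and Lemma~\ref{basix}(iii), write $X=\bigcup_nX_n$ with $N_{X_n}(r)\chi(r)\leq1$ for $r$ small. Covering each $X_n\times E$, $E\in\mathcal C_k$, by $N_{X_n}(\diam E)$ sets of diameter $\leq\diam E$ gives a cover of $X_n\times Y$ with total $\phi$-weight
\[
  \sum_{E\in\mathcal C_k}\phi(\diam E)N_{X_n}(\diam E)\leq\sum_{E\in\mathcal C_k}\psi(\diam E)<2^{-k}\to0.
\]
Thus $\hm^\phi(X_n\times Y)=0$ for every $n$, and by \si additivity $\hm^\phi(X\times Y)=0$.
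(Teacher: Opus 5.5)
Part (i) has a genuine gap. The reduction ``by Lemma~\ref{basix}(ii) it suffices to show $\ubox_0^{\phi^\alpha}(X_n\times Y)=0$'' runs the wrong way. For $\alpha>1$ we have $\phi^\alpha\leq\phi$ near $0$, so $\ubox^{\phi^\alpha}$-nullness is \emph{weaker} than $\ubox^{\phi}$-nullness. Consequently $\pack^\phi\leq\ubox^{\phi^\alpha}$ is false: in $\Rset$ with $\phi(r)=r$ and $\alpha=2$, $\ubox^2([0,1])\leq\ubox^2_0([0,1])=0$, while $\pack^1([0,1])\geq\hm^1([0,1])=1$. Lemma~\ref{basix}(ii) is misstated as printed. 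What is true is a bound of $\pack^\phi$ by $\ubox^\psi$ for a gauge $\psi$ much \emph{larger} than $\phi$, e.g.\ $\pack^\phi\leq4\ubox^\psi$ with $\psi(r)=\sqrt{\phi(2r)}$.

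What your computation actually proves is $\ubox^\phi(X\times Y)=0$; it works just as well with $\alpha=1$ and $\chi\geq\sqrt{h(\cdot/2)}$. Incidentally, that already suffices for Theorem~\ref{products}(i) via Lemma~\ref{upsilon}, but it is not statement (i) for the given $\phi$. Nor can the scheme be saved by passing to such a larger $\psi$. Single-scale information on $Y$ gives only $N_Y(r)\leq h(r/2)/\phi(r)$, which leaves a factor $\psi(r)/\phi(r)\to\infty$ that $h$, possibly decaying arbitrarily slowly, need not absorb. The real issue is that $\pack_\del^\phi$ is a supremum over packings mixing infinitely many scales, and the scale-by-scale bounds $\phi(2t)M_Y(t)\leq h(t)$ do not add up.

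Two ingredients are missing. The first is a rate improvement for $Y$: a gauge $\psi$ with $\phi/\psi\to0$ and still $\pack_0^\psi(Y)=0$. The paper takes this from \cite[Theorem 4]{MR3209322}; your diagonal construction from (ii) also yields it, because a subfamily of a $\del$-fine packing is again a $\del$-fine packing. The second is a genuinely multi-scale product estimate. The paper uses \cite[Theorem 17]{MR1362951}, namely $\pack^{\xi\psi}(X\times Y)\leq\pack^\xi(X)\pack^\psi(Y)$ for a doubling $\xi\geq\phi/\psi$. A direct argument in your spirit also works:
\begin{enumerate}
\item Split a $\del$-fine packing of $X_n\times Y$ into classes with radii in $(\rho_{k+1},\rho_k]$, and cover $X_n$ by $N_{X_n}(\rho_{k+1})$ sets of diameter $\leq\rho_{k+1}$.
\item Class-$k$ centers whose first coordinates lie in one of these sets have second coordinates forming a packing of $Y$ with the same radii. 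So class $k$ contributes at most $N_{X_n}(\rho_{k+1})\cdot\sup_{r\leq\rho_k}\frac{\phi(2r)}{\psi(2r)}\cdot\pack_\del^\psi(Y)$.
\item Choose $\rho_k$ so that the supremum is $\leq4^{-k}$, and \emph{then}, before decomposing $X$, choose $\chi$ with $\chi(\rho_{k+1})\geq2^{-k}$. This gives $\pack^\phi_\del(X_n\times Y)\leq2\pack^\psi_\del(Y)\to0$ for small $\del$.
\end{enumerate}

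Your part (ii) is essentially right and follows the paper's scheme: a Rogers-type rate improvement, then a product estimate. You replace the cited inequality $\hm^{\xi\psi}(X\times Y)\leq\pack^\xi(X)\hm^\psi(Y)$ by a more elementary box-counting step. That step suffices there because a Hausdorff cover is handled one set, hence one scale, at a time. One repair is needed: you cannot require all diameters in $\mathcal C_k$ to lie in $(\del_{k+1},\del_k]$, since countable covers with small $\phi$-sum typically have diameters accumulating at $0$. Instead, pool the covers and choose $\del_k$ so that the pooled sets with diameter in $(0,\del_k]$ have $\phi$-sum below $4^{-k}$.
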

\begin{proof}
(i)
Suppose that $\pack_0^\phi(Y)=0$. By~\cite[Theorem 4]{MR3209322} there is a gauge $\psi\prec\phi$
such that $\pack_0^\psi(Y)=0$. Let $\xi$ be any doubling gauge satisfying $\xi\geq\frac{\phi}{\psi}$;
such a gauge exists, because $\lim_{r\to0}\frac{\phi(r)}{\psi(r)}=0$.
Since $\xi$ is doubling, it follows from~\cite[Theorem 17]{MR1362951} that
$\pack^{\xi\psi}(X\times Y)\leq\pack^\xi(X)\pack^\psi(Y)$. Since $X$ is \sssmz{}, we have
$\pack^\xi(X)=0$. Therefore
\[
  \pack^\phi(X\times Y)\leq\pack^{\xi\psi}(X\times Y)
  \leq\pack^\xi(X)\pack^\psi(Y)=0\cdot0=0.
\]

(ii) is similar.
Suppose that $\hm^\phi(Y)=0$. By~\cite[Theorem 42]{MR0281862} there is a gauge $\psi\prec\phi$
such that $\hm^\psi(Y)=0$. Let $\xi$ be any gauge satisfying $\xi\geq\frac{\phi}{\psi}$.
By~\cite[Theorem 10]{MR1362951}
$\hm^{\xi\psi}(X\times Y)\leq\pack^\xi(X)\hm^\psi(Y)$. Proceed as above.
\end{proof}
Note that letting $Y$ to be a one point space shows that (i) is actually equivalent with \sssmz{}
and later we will see that if $X\subs\Rset^n$, then (ii) is also equivalent with \sssmz{}.
\begin{thm}\label{products}
Let $X,Y$ be metric spaces.
\begin{enum}
\item If $X$ and $Y$ are \sssmz{}, then $X\times Y$ is \sssmz{}.
\item If $X$ is \sssmz{} and $Y$ is \smz{}, then $X\times Y$ is \smz{}.
\end{enum}
\end{thm}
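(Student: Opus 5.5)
Both parts come from Proposition~\ref{prodPnull}, together with the $\sigma$-ideal property of \sssmz{} and the Besicovitch characterization of \smz{}.

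For (i), fix a gauge $\phi$; we must show $\pack^\phi(X\times Y)=0$. Since $Y$ is \sssmz{}, $\pack^\phi(Y)=0$. Unwinding the definition of $\pack^\phi$, for every $\eps>0$ there is a cover $\{Y_n\}$ of $Y$ with $\sum_n\pack_0^\phi(Y_n)<\eps$. Taking $\eps=1/k$ for each $k$ and collecting the covers, we see that $Y$ is covered by countably many sets $Y_{k,n}$. These need not satisfy $\pack_0^\phi(Y_{k,n})=0$ for the fixed $\phi$, so we pass to a faster gauge.

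By~\cite[Theorem 4]{MR3209322} (the result already used in Proposition~\ref{prodPnull}), the analogue for $\pack$ should give a gauge $\psi\prec\phi$ with $\pack^\psi(Y)=0$. I would rather avoid this and argue directly. Because $Y$ is \sssmz{}, $\pack^{\phi'}(Y)=0$ for every gauge $\phi'$. Hence for a gauge $\phi'$ we get a cover $\{Y_n\}$ with $\sum_n\pack_0^{\phi'}(Y_n)<\infty$, in particular each term finite.

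Next choose $\phi'$ with $\phi\prec\phi'$, e.g.\ $\phi'=\sqrt{\phi}$ (after adjusting to stay a gauge). Finiteness of $\pack_0^{\phi'}(Y_n)$ then forces $\pack_0^{\phi}(Y_n)=0$: for $\del$-fine packings we have $\sum\phi(2r_i)\le\sup_{r\le\del}\frac{\phi(2r)}{\phi'(2r)}\sum\phi'(2r_i)$, and the supremum tends to $0$ as $\del\to0$.

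So $Y=\bigcup_n Y_n$ with $\pack_0^\phi(Y_n)=0$. Proposition~\ref{prodPnull}(i) gives $\pack^\phi(X\times Y_n)=0$ for each $n$, and countable subadditivity of $\pack^\phi$ yields $\pack^\phi(X\times Y)=0$. Since $\phi$ was arbitrary, $X\times Y$ is \sssmz{}.

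For (ii), fix a gauge $\phi$. By Besicovitch's theorem, $\hm^\phi(Y)=0$. Proposition~\ref{prodPnull}(ii) then gives $\hm^\phi(X\times Y)=0$. As $\phi$ is arbitrary, Besicovitch's theorem again shows $X\times Y$ is \smz{}; here the product carries the maximum metric, and the theorem applies in any metric space.

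The only delicate point is the gauge-comparison step in (i): producing $\phi'\succ\phi$ that is genuinely a gauge (non-decreasing, right-continuous, vanishing only at $0$), and checking that the comparison inequality passes from $\pack_\del$ to $\pack_0$. Taking $\phi'=\sqrt\phi$ works whenever $\phi\le1$ near $0$, and only small $r$ matter, so this looks routine.
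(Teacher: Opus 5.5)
Your proof is correct and follows the paper's. In (i) you cover $Y$ by sets $Y_n$ with $\pack_0^\phi(Y_n)=0$ and apply Proposition~\ref{prodPnull}(i) with $\sigma$-subadditivity, and in (ii) you combine Besicovitch's theorem with Proposition~\ref{prodPnull}(ii). Your comparison with $\phi'=\sqrt\phi$ correctly proves that such a cover exists, a step the paper only asserts. One notational fix: in the paper's convention you need $\phi'\prec\phi$ (that is, $\phi/\phi'\to0$), not $\phi\prec\phi'$. Also, $\sqrt\phi$ is always a gauge and $\phi/\sqrt\phi=\sqrt\phi\to0$, so no adjustment and no assumption $\phi\le1$ is needed.
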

\begin{proof}
Let $\phi$ be a gauge.

(i) Since $Y$ is \sssmz{}, there is a cover $\{Y_n:n\in\Nset\}$ of $Y$
such that $\pack^\phi_0(Y_n)=0$ for all $n$. By Proposition~\ref{prodPnull}(i)
$\pack^\phi(X\times Y_n)=0$. Hence $\pack^\phi(X\times Y)=0$.

(ii) Since $Y$ is \smz{}, we have $\hm^\phi(Y)=0$. By Proposition~\ref{prodPnull}(ii)
$\hm^\phi(X\times Y)=0$.
\end{proof}

\subsection*{$\gamma$-covers}
Recall that a sequence $\seq{U_n:n\in\Nset}$ of sets in $X$ is called a \emph{$\gamma$-cover}
if each $x\in X$ belongs to all but finitely many $U_n$'s.
The following three combinatorial characterizations of \sssmz{} in terms of $\gamma$-covers
are all alike, but do differ. They have the same pattern as the original Borel's definition
of \smz{}.
\begin{thm}\label{thm:sssmz}
The following are equivalent:
\begin{enum}
\item $X$ has strict measure zero.
\item For each $f\in\UPset$ and for each $\Seqeps$ there is a sequence
$\seq{E_n:n\in\Nset}$ and a partition of $\Nset$ into consecutive intervals
$\seq{I_k:k\in\Nset}$ such that
\begin{enum}
\item[\rm(a)] $\forall n\ \diam E_n\leq\eps_n$,
\item[\rm(b)] $\forall k\ \abs{I_k}\leq f(k)$,
\item[\rm(c)] $\forall x\in X\ \fmany k\ \exists n\in I_k\ x\in E_n$.
\end{enum}
\item For each $\Seqeps$ there is a sequence
$\seq{F_n:n\in\Nset}$ of subsets of $X$ such that
\begin{enum}
\item[\rm(a)] $\forall n\ \abs{F_n}\leq n$,
\item[\rm(b)] $\seq{B(F_n,\eps_n):n\in\Nset}$ is a $\gamma$-cover of $X$.
\end{enum}
\item There is $f\in\UPset$ such that for each $\Seqeps$ there is a sequence
$\seq{E_n:n\in\Nset}$ and a partition of $\Nset$ into consecutive intervals
$\seq{I_k:k\in\Nset}$ such that
\begin{itemize}
\item[\rm(a)] $\forall k\ \forall n\in I_k\ \diam E_n\leq\eps_k$,
\item[\rm(b)] $\forall k\ \abs{I_k}\leq f(k)$,
\item[\rm(c)] $\forall x\in X\ \fmany k\ \exists n\in I_k\ x\in E_n$.
\end{itemize}
\end{enum}
\end{thm}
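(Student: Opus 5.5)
The plan is to pass through the box-counting characterization of Lemma~\ref{upsilon}: $X$ is \sssmz{} iff $\ubox^\phi(X)=0$ for every gauge $\phi$. We prove (i)$\Rightarrow$(ii)$\Rightarrow$(iii)$\Rightarrow$(i) and (ii)$\Rightarrow$(iv)$\Rightarrow$(i). We use two facts throughout.

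First, $\ubox^\phi(X)=0$ for all gauges $\phi$ iff for every gauge $\phi$ there is a cover $X=\bigcup_j X_j$ with $\ubox_0^\phi(X_j)=0$. This is done by passing to a slower gauge as in the proof of Proposition~\ref{prodPnull} (using \cite{MR3209322}).

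Second, by Lemma~\ref{basix}(iii) we may replace the countable cover by an increasing sequence $X_j\upto X$. Each $X_j$ then satisfies $N_{X_j}(\del)\phi(\del)\to0$. So for any prescribed function $m\colon\Nset\to\Nset$ we get: for every $j$, for all sufficiently small $\del$, $N_{X_j}(\del)\leq 1/\phi(\del)$. A single gauge $\phi$ decaying slowly enough relative to $m$ therefore gives $N_{X_j}(\del_k)\leq m(k)$ for all $k\geq k_j$, along a chosen sequence $\del_k\to0$.

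\textbf{(i)$\Rightarrow$(ii).} Let $f\in\UPset$ and $\seqeps$ be given.
\begin{itemize}
\item Choose intervals $I_k$ of length $f(k)$, or rather of length $\min(f(k),\cdot)$ as needed.
\item Let $\del_k=\min\{\eps_n:n\in I_k\}$, and build a gauge $\phi$ with $\phi(\del_k)\geq 1/f(k)$. This is possible, after making $\del_k$ decreasing and passing to a monotone minorant, since $f$ is unbounded.
\item With $X_j\upto X$ and $\ubox_0^\phi(X_j)\to$ small, for each $j$ and all large $k$ we have $N_{X_j}(\del_k)\leq f(k)$.
\item Diagonalize: for each $k$ let $j(k)$ be the largest $j\leq k$ with $k\geq k_j$, and cover $X_{j(k)}$ by $f(k)$ sets of diameter at most $\del_k$, enumerated as $E_n$, $n\in I_k$.
\end{itemize}
Every $x$ lies in some $X_j$, hence in $X_{j(k)}$ for almost all $k$, which gives (c).

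\textbf{(ii)$\Rightarrow$(iii).} Take $f(k)=k$ and intervals $I_k$. Let $F_n$ consist of one point from each nonempty $E_m\cap X$ with $m$ in the block containing $n$, so that $\abs{F_n}\leq\abs{I_k}\leq n$ after a suitable reindexing. To handle $\gamma$-covering at every $n$, not just along blocks, require $\diam E_m\leq\min_{n\in I_k}\eps_n$. Then $B(F_n,\eps_n)\supseteq\bigcup_{m\in I_k}E_m$ for each $n\in I_k$. Some care is needed that $\abs{I_k}\leq\min I_k$, which is achieved by letting $f$ grow slowly, e.g.\ $f(k)=k$ with blocks starting beyond $k^2$.

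\textbf{(iii)$\Rightarrow$(i).} Given $\phi$, choose $\eps_n$ so small that $n\,\phi(2\eps_n)\leq 2^{-n}$, and let $X_m=\{x: x\in B(F_n,\eps_n)\ \forall n\geq m\}$. Then $N_{X_m}(2\eps_n)\leq n$ for $n\geq m$. However, $\ubox^\phi_0$ is a $\limsup$ over all $\del$, not just over $\eps_n$. We handle intermediate $\del\in[2\eps_{n+1},2\eps_n]$ by requiring $(n+1)\phi(2\eps_n)\to0$, and bound $\phi(\del)N(\del)\leq\phi(2\eps_n)(n+1)$. Hence $\ubox_0^\phi(X_m)=0$ and $\ubox^\phi(X)=0$.

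\textbf{(ii)$\Rightarrow$(iv) and (iv)$\Rightarrow$(i).} The first is immediate with any $f$, using $\eps'_n:=\eps_k$ for $n\in I_k$ after fixing the blocks, or by reindexing. The second is like (iii)$\Rightarrow$(i): with fixed $f$ and a given $\phi$, choose $\eps_k$ with $f(k)\phi(\eps_k)\leq2^{-k}$ and decreasing. Then $X_m=\{x:\forall k\geq m\ \exists n\in I_k\ x\in E_n\}$ has $N_{X_m}(\eps_k)\leq f(k)$. The same interpolation across $[\eps_{k+1},\eps_k]$ (requiring $f(k+1)\phi(\eps_k)\to0$) gives $\ubox_0^\phi(X_m)=0$.

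\textbf{Main obstacle.} The main obstacle is the gauge construction in (i)$\Rightarrow$(ii): producing a single gauge from $f$ and $\seqeps$ such that zero box-counting measure forces covering numbers at scale $\del_k$ below $f(k)$ eventually, together with the diagonal choice of $j(k)$. The book-keeping of the interval lengths in (ii)$\Rightarrow$(iii) is secondary.
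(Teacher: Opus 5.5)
Your argument is correct, and its two substantive steps are the paper's. For (i)$\Rightarrow$(ii) you use a gauge with $\phi(\del_k)\geq 1/f(k)$, Lemma~\ref{basix}(iii), and a diagonalization over $X_j\upto X$. For the way back to (i) you use the interpolation $N_{X_m}(\del)\phi(\del)\leq f(k+1)\phi(\eps_k)$ for $\del\in[\eps_{k+1},\eps_k)$, with $\eps_k$ chosen small relative to the \emph{next} block size. Fixing blocks of length $f(k)$ first and putting $\del_k=\min_{n\in I_k}\eps_n$ is even a little tidier than the paper's $\del_k=\eps_{f(0)+\dots+f(k)}$, which tacitly needs $\seqeps$ nonincreasing.

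What differs is the implication scheme. The paper runs one cycle (i)$\Rightarrow$(ii)$\Rightarrow$(iii)$\Rightarrow$(iv)$\Rightarrow$(i), in which (iii)$\Rightarrow$(iv) is a mere re-enumeration: list the points of $F_k$ as $x_n$, $n\in I_k$, set $E_n=B(x_n,\eps_k/2)$ and $f(k)=k$. So the interpolation is done once, whereas you do it twice, in (iii)$\Rightarrow$(i) and (iv)$\Rightarrow$(i).

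Your (ii)$\Rightarrow$(iii) is also harder than it needs to be. The paper indexes by blocks, $F_k=\{x_j:j\in I_k\}$ with $f(k)=k$, so $\abs{F_k}\leq k$ is automatic and nothing like $\abs{I_k}\leq\min I_k$ is required. If you keep spreading a block's points over all its indices, note that you cannot prescribe where blocks start, since they are produced by (ii). What makes $\abs{I_k}\leq\min I_k$ hold for almost all $k$ is that, by (c), almost all blocks are nonempty. Hence $\min I_k\geq k-O(1)$, and then e.g.\ $f(k)=\lfloor k/2\rfloor$ works. The condition $\diam E_m\leq\min_{n\in I_k}\eps_n$ must likewise be secured by feeding (ii) a shrunken sequence such as $\eps'_m=\min_{i\leq 2m}\eps_i$. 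The same caveat applies to ``fixing the blocks'' in your (ii)$\Rightarrow$(iv).

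Finally, the ``first fact'' needs no citation. The bound $\ubox_0^\phi(X_j)<1$, which Lemma~\ref{basix}(iii) gives directly, already suffices; alternatively, apply the hypothesis to the gauge $\sqrt\phi$.
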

\begin{proof}
(i)\Implies(ii):
Let $\wh n=f(0)+f(1)+\dots+f(n)$ and $\del_n=\eps_{\wh n}$. Choose a gauge such that
$\phi(\del_n)>1/f(n)$.

Since $X$ is \sssmz{}, we have $\ubox^\phi(X)=0$.
Lemma~\ref{basix}(iii) yields a sequence $X_j\upto X$ such that
$\ubox_0^\phi(X_j)<1$ which in turn yields a sequence $\seq{n_j}$ such that
$$
  \forall j\ \forall n\geq n_j\quad N_{X_n}(\del_n)\cdot\phi(\del_n)<1.
$$
Therefore for all $j$ and $k\in[n_j,n_{j+1})$ there is a $\del_k$-cover $\EEE_k$ of $X_k$
such that (the latter inequality by $\phi(\del_n)>1/f(n)$)
\begin{equation}\label{borel1}
  \abs{\EEE_k}<\frac{1}{\phi(\del_k)}\leq f(k).
\end{equation}
Let $\EEE=\bigcup_{k\in\Nset}\EEE_k$ and enumerate it so that $\EEE_k=\{E_n:n\in I_k\}$
and $I_k$ are consecutive intervals.
So if $n\in I_k$, then
$$
  n\leq\abs{I_0}+\abs{I_1}+\dots+\abs{I_k}\leq f(0)+f(1)+\dots+f(k),
$$
hence $\diam E_n\leq\del_k=\eps_{f(0)+f(1)+\dots+f(k)}\leq\eps_n$ and (a) follows.
Item (b) follows from~\eqref{borel1} and (c) from the fact that $\EEE_k$'s are covers of
$X_k$.

(ii)\Implies(iii): Let $f(n)=n$. Then using (i) choose, for each $j$, $x_j\in E_j$
and let $F_n=\{x_j:j\in I_n\}$.

(iii)\Implies(iv): Start with $\eps_n/2$ instead of $\eps_n$ and consider the sequence
$\seq{F_n:n\in\Nset}$ of (iii). Enumerate the points in $F_1$ by the interval $I_1=[0,\abs{F_1})$,
then $F_2$ by the interval $I_2=[\abs{F_1},\abs{F_1}+\abs{F_2})$ etc.
The resulting sequence is thus $\seq{x_n:n\in\Nset}$.
Let $E_n=B(x_n,\eps_n/2)$ and $f(n)=n$.

(iv)\Implies(i): Let $\phi$ be a gauge. Choose $\eps_k>0$ so that $f(k)\phi(\eps_{k-1})\leq1/k$.
Consider the sequence $\seq{E_k:k\in\Nset}$ from (iv).
Let $Y_k=\bigcup_{n\in I_k}E_n$ and $X_m=\bigcap_{n\geq m}X_n$.
We estimate $\ym^\phi_\eps(X_k)$:
If $\eps_{k}\leq\eps<\eps_{k-1}$, then
$$
  N_{Y_k}(\eps)\phi(\eps)\leq N_{Y_k}(\eps_k)\phi(\eps_{k-1})
  \leq f(k)\phi(\eps_{k-1})\leq1/n,
$$
hence $\ubox^\phi_\eps(X_k)\leq1/k$ and thus $\ubox^\phi_0(X_k)=0$.
Therefore $\ubox^\phi(X_k)=0$. Since (iv)(c) yields $X_k\upto X$, we are done.
\end{proof}

\section{Galvin's game}
\label{sec:galvin}

Galvin~\cite{MR3696064} characterized \smz{} sets in \si compact metric  spaces
in terms of a game. A similar characterization of \ssmz{} sets
(and hence for meager-additive sets) was proved in \cite{MR4472524}.
We consider a similar game and prove a counterpart of Galvin's theorem
for \sssmz{} sets.

\begin{defn}
Let $X$ be a metric space. The game $G^{\musDoubleSharp\mkern-1mu}(X)$ is played as follows:
At the $n$-th inning, Player I chooses $\eps_n>0$
and Player II responds with a finite set $F_n\subs X$ such that
$\abs{F_n}\leq n$.
Player II wins if the sequence of sets $\seq{B(F_n,\eps_n):n\in\Nset}$
is a $\gamma$-cover of $X$, otherwise Player I wins.
\end{defn}

\begin{thm}\label{Galvins}
A metric space $X$ is \sssmz{} if and only if Player I does not have
a winning strategy in $G^{\musDoubleSharp\mkern-1mu}(X)$.
\end{thm}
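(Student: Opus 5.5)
The implication ``Player~I has no winning strategy $\Rightarrow$ $X$ is \sssmz{}'' is immediate from Theorem~\ref{thm:sssmz}. Given any $\Seqeps$, let Player~I use the non-adaptive strategy ``play $\eps_n$ at the $n$-th inning''. Since it is not winning, some sequence of moves $\seq{F_n}$ of Player~II with $\abs{F_n}\leq n$ makes $\seq{B(F_n,\eps_n)}$ a $\gamma$-cover of $X$. This is exactly condition (iii) of Theorem~\ref{thm:sssmz}, so $X$ is \sssmz{}.

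For the converse, assume $X$ is \sssmz{} and let $\sigma$ be a strategy of Player~I; I will build a play against $\sigma$ that Player~II wins. The difficulty is that Player~II has infinitely many possible moves, so $\sigma$ may produce arbitrarily small $\eps$'s. As in Galvin's argument, I first restrict Player~II to finitely many moves per inning, which needs some total boundedness.

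\emph{Step 1.} Fix any gauge $\phi$. Since $\ubox^\phi(X)=0$, Lemma~\ref{basix}(iii) gives $X_j\upto X$ with $\ubox_0^\phi(X_j)<1$. In particular $N_{X_j}(\del)<\infty$ for all small $\del$, so each $X_j$ is totally bounded.

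\emph{Step 2.} Define by recursion on $n$ numbers $\eta_n>0$ and finite sets $D_n\subs X$. Suppose $D_0,\dots,D_{n-1}$ are defined. There are only finitely many partial plays $(F_0,\dots,F_{n-1})$ with $F_i\subs D_i$ and $\abs{F_i}\leq i$. Let $\eta_n$ be the minimum of the values $\sigma(F_0,\dots,F_{n-1})$ over these plays, together with $\sigma$'s responses at the earlier positions. Then let $D_n$ be a finite $\eta_n/4$-net of $X_n$, which exists by Step~1.

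\emph{Step 3.} The sequence $\seq{\eta_n}$ is now fixed, so Theorem~\ref{thm:sssmz}(iii) applied to $\seq{\eta_n/4}$ yields $F_n\subs X$ with $\abs{F_n}\leq n$ such that $\seq{B(F_n,\eta_n/4)}$ is a $\gamma$-cover. For each $f\in F_n$ whose ball $B(f,\eta_n/4)$ meets $X_n$, pick $y$ in that intersection and $d\in D_n$ with $d(y,d)\leq\eta_n/4$. Let $G_n\subs D_n$ be the set of these $d$'s, so $\abs{G_n}\leq n$. If $x\in X_m$ and $n\geq m$ is large enough, then $x\in B(f,\eta_n/4)$ for some $f\in F_n$, and since $x\in X_m\subs X_n$ this ball meets $X_n$. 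The triangle inequality then gives $d(x,G_n)\leq3\eta_n/4<\eta_n$. Hence $\seq{B(G_n,\eta_n)}$ is a $\gamma$-cover.

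\emph{Step 4.} Let Player~II play $G_0,G_1,\dots$ against $\sigma$. Every position reached lies in the finite list of Step~2, so $\sigma$'s $n$-th move is $\eps_n\geq\eta_n$. Thus $\seq{B(G_n,\eps_n)}$ is a $\gamma$-cover and Player~II wins, so $\sigma$ is not winning.

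The step needing most care is Step~2. The recursion must keep the position space finite at every stage, so each net $D_n$ is chosen only after $\eta_n$ is known, and the covering sets obtained from Theorem~\ref{thm:sssmz}(iii) must be moved into these predetermined nets. The factor $1/4$ is there to absorb the two triangle-inequality losses in Step~3.
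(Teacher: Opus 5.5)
Your proof is correct and follows essentially the same route as the paper. You restrict Player~II to finite nets of totally bounded pieces $X_n$, so that the minimum $\eta_n$ of $\sigma$'s responses over the finitely many admissible positions is positive. You then apply Theorem~\ref{thm:sssmz}(iii) to $\seq{\eta_n/4}$ and move the centres into the nets; the paper does this step via its condition (b) on sets of diameter at most $\eta_n/2$. Your Step~1, which derives $\sigma$-total boundedness from Lemma~\ref{basix}(iii), also supplies a justification for a claim the paper only asserts.
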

\begin{proof}
The forward implication:
Suppose that $X$ is \sssmz{}. Then $X$ is a \si totally bounded, so there is an
increasing sequence $X_n\upto X$ of totally bounded sets.

Let $\sigma$ be a strategy for Player I. We show that $\sigma$ is not winning.
Construct by recursion $\del_n>0$ and finite sets $G_n\subs X_n$ subject to the
following conditions:
\begin{enum}
\item[(a)] $\del_n=\min\{\sigma(F_0,F_1,\dots,F_{n-1}):\forall i<n\ F_i\in[G_i]^{\leq i}\}$,
\item[(b)] If $A\subs X_n$ and $\diam A\leq\frac{\del_n}{2}$,
  then $\exists x\in G_n\ A\subs B(x,\del_n)$.
\end{enum}
$G_n$'s exist because $X_n$'s are totally bounded.

Since $X$ is \sssmz{}, by Theorem~\ref{thm:sssmz}(ii) there is a sequence
$\seq{H_n:n\in\Nset}$ such that $H_n\in[X]^n$ for all $n$ and
the sequence $\seq{B(H_n,\frac{\del_n}{4}):n\in\Nset}$ is a $\gamma$-cover of $X$.
For each $y\in H_n$ write $A_y=B(x,\frac{\del_n}{4})\cap X_n$.
 Then $\diam A_y\leq\frac{\del_n}{2}$,
so (b) yields $x\in G_n$ such that $A_y\subs B(x,\del_n)$. In summary,
\begin{equation}\label{galvingame}
  \forall n\ \exists F_n\in[G_n]^{\leq n}\ B(H_n,\tfrac{\del_n}{4})\cap X_n\subs B(F_n,\del_n).
\end{equation}
Let $\eps_n=\sigma(F_0,F_1,\dots,F_{n-1})$. Clearly $\eps_n\geq\del_n$, therefore the sequence
$\seq{\eps_0,F_0,\eps_1,F_1,\dots}$ is played according to the strategy $\sigma$.
Routine verification shows that since both $\seq{B(H_n,\tfrac{\del_n}{4})}$
and $\seq{X_n}$ are $\gamma$-covers, \eqref{galvingame} ensures that $\seq{B(F_n,\del_n)}$
is also a $\gamma$-cover. So Player II wins.
Consequently, $\sigma$ is not a winning strategy for Player I.
We showed that no strategy is winning.

The backwards implication is trivial: if $X$ is not \sssmz{}, then
by Theorem~\ref{thm:sssmz} there is $\Seqeps$ such that $X$ has
no $\seqeps$-fine $\gamma$-cover. The winning strategy for Player I
is of course to play $\eps_n$ at the $n$-th inning.
\end{proof}

\section{Null-additive sets of reals}\label{sec:nadd}

We mentioned it the introduction the Galvin-Mycielski-Solovay Theorem~\cite{GMS}
(abbreviated as GMS Theorem): a set $X\subs\Rset$ has strong measure zero if and only if
for each meager set $M$ the algebraic sum $X+M$ does not cover $\Rset$.
A set $X\subs\Cset$ is called \emph{null-additive} if for each Lebesgue null set $N$
the algebraic sum $X{+}N\in\NN$ is Lebesgue null.
The notion of a null-additive set $X\subs\Rset$ is defined likewise,
and so is that on the circle $\Tset$.
In this section we prove a theorem analogous to GMS Theorem:
a subset of $\Cset$ or $\Rset$ or $\Tset$ is null-additive if and only if it is \sssmz{}.
\begin{thm}\label{mainNN}
Let $\grG=\Cset$ or $\grG=\Rset$ or $\grG=\Tset$. If $X\subs\grG$, the following are equivalent.
\begin{enum}
\item $X$ is \sssmz,
\item $X$ is null-additive,
\item $\hm^1(X\times N)=0$ for each $N\in\NN$.
\end{enum}
\end{thm}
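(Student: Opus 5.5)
I would prove the cycle (i)\Implies(iii)\Implies(ii)\Implies(i), working first in $\Cset$ and then transferring to $\Rset$ and $\Tset$.

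The implication (i)\Implies(iii) is immediate from Proposition~\ref{prodPnull}(ii) with $\phi(r)=r$. In $\Cset$, and likewise in $\Rset$ and $\Tset$, $\hm^1$ agrees with Haar/Lebesgue measure up to a constant, so $N\in\NN$ means $\hm^1(N)=0$, and hence $\hm^1(X\times N)=0$.

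For (iii)\Implies(ii), consider the map $g\colon\grG\times\grG\to\grG$, $g(x,y)=x+y$. It is Lipschitz for the maximum metric: in $\Cset$ with the least difference metric it is even $1$-Lipschitz, since $\Delta(x+y,x'+y')\geq\min(\Delta(x,x'),\Delta(y,y'))$; in $\Rset$ and $\Tset$ it is $2$-Lipschitz. Lipschitz maps do not increase $\hm^1$ by more than a constant factor. Hence $\hm^1(X+N)=\hm^1(g(X\times N))=0$, so $X+N$ is null.

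The main work is (ii)\Implies(i), and I would do it in $\Cset$ first.

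1. By Lemma~\ref{upsilon} and Theorem~\ref{thm:sssmz}, it suffices to show that for each $\Seqeps$ there are finite sets $F_n$ with $\abs{F_n}\leq n$ such that $\seq{B(F_n,\eps_n)}$ is a $\gamma$-cover of $X$.

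2. I would use Shelah's combinatorial characterization of null-additive subsets of $\Cset$ (Shelah~\cite{MR1324470}). For every $f\in\UPset$ there is a sequence $\seq{S_n}$ with $S_n\subs 2^{[f(n),f(n+1))}$ and $\abs{S_n}\leq n$ such that every $x\in X$ satisfies $x\rest[f(n),f(n+1))\in S_n$ for all but finitely many $n$.

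3. Given $\seqeps$, choose $f$ increasing fast enough that $2^{-f(n)}\leq\eps_n$.

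4. The dyadic-block localization from step 2 does not directly yield balls. The remedy is to apply Shelah's characterization to the block decomposition obtained after thinning, localizing instead the whole initial segment $x\rest f(n+1)$. Alternatively, I would prove directly from null-additivity that, for the sets $X_m=\{x:\forall n\geq m\ x\rest[f(n),f(n+1))\in S_n\}$, we get $\ubox^\phi_0(X_m)=0$ for a given gauge $\phi$ after choosing $f$ depending on $\phi$. The idea is to cover $X_m$ at scale $2^{-f(n+1)}$ by cylinders determined by the choices in $S_m,\dots,S_n$ together with the $2^{f(m)}$ possible prefixes. That gives at most $2^{f(m)}\prod_{j\leq n} j$ cylinders. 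Choosing $f$ so that $\phi(2^{-f(n+1)})\cdot 2^{f(m)}\, n!\to0$ gives $\ubox^\phi_0(X_m)=0$. This step is what I expect to be the main obstacle, since it relies on Shelah's theorem (or a reproof of it via a suitable null set $N$ built from $f$ and $\phi$: if the localization failed, $X+N$ would cover a positive measure set).

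For $\Rset$ and $\Tset$, I would transfer via the binary expansion map $\pi\colon\Cset\to[0,1]$. This map is Lipschitz, preserves null sets in both directions, and is "almost" a homomorphism up to carries. I would show that if $X\subs[0,1]$ is null-additive then $\pi^{-1}(X)$ is null-additive in $\Cset$, handling carries by splitting into finitely many translated pieces. Then $\pi^{-1}(X)$ is \sssmz{}, and Proposition~\ref{unifcont} gives that $X=\pi(\pi^{-1}(X))$ is \sssmz{}. Since \sssmz{} is a \si ideal, this extends to all of $\Rset$, and the case of $\Tset$ follows by the quotient map. The carry bookkeeping here is the secondary technical difficulty.
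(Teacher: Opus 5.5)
Your implications (i)$\Implies$(iii)$\Implies$(ii) follow the paper. So does your ``alternative'' argument for (ii)$\Implies$(i) in $\Cset$: Shelah's localization for a fast-growing $f$, the sets $X_m$, and counting cylinders to get $\ubox^\phi_0(X_m)=0$. One repair is needed there. $\ubox^\phi_0$ is a $\limsup$ over \emph{all} $\del\to0$, so controlling $\phi(\del)N_{X_m}(\del)$ only at $\del=2^{-f(n+1)}$ is not enough. For $2^{-f(n+1)}<\del\leq2^{-f(n)}$, the elements of $S_n$ may already be separated by the first few digits of their block. Then $N_{X_m}(\del)$ can be as large as $2^{f(m)}\prod_{j=m}^{n}j$, while $\phi(\del)$ is bounded only by $\phi(2^{-f(n)})$. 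So you need $\phi(2^{-f(n)})\,2^{f(m)}n!\to0$ for every $m$. The paper secures this by choosing $F$ with $F(i)\phi(2^{1-i})\to0$ and then $f$ with $2^{f(n)}(n+1)!\leq F(f(n+1))$; it only costs a faster-growing $f$.

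The genuine gap is the transfer to $\Rset$. The group operation of $\Cset$ is coordinatewise addition mod $2$, which has no carries at all, so $\pi$ is not a homomorphism ``up to carries'' in any finitary sense. Writing $y\cdot z$ for the coordinatewise product, you have $\pi(y)+\pi(z)=\pi(y+z)+2\pi(y\cdot z)$. For $y$ with infinitely many ones, the map $z\mapsto\pi(y+z)$ flips infinitely many binary digits of $\pi(z)$, and its displacement depends on all those digits of $z$. The correction $2\pi(y\cdot z)$ depends jointly on $y$ and $z$ and is not finitely-valued, so it cannot be absorbed by splitting into finitely many translated pieces. Null-additivity of $X$ under real addition therefore gives you no evident handle on the sets $\pi^{-1}(X)+N$.

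Indeed, ``$X\subs[0,1]$ null-additive $\Implies$ $\pi^{-1}(X)$ null-additive in $\Cset$'' is one half of the corollary the paper derives \emph{from} this theorem. The paper bypasses it with Theorem~13 of \cite{MR3241126}: there are $A\subs\Cset$ and a Lipschitz surjection $H\colon A\to[0,1)$ such that preimages of null-additive sets are null-additive. Then $H^{-1}(X)$ is \sssmz{} by the Cantor-set case, and $X=H(H^{-1}(X))$ is \sssmz{} because $H$ is Lipschitz. Without such a transfer result you would have to re-prove Shelah's localization theorem directly for real addition, which is substantive work rather than carry bookkeeping. Your reductions to $[0,1)$ and from $\Tset$ to $\Rset$ are fine.
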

\begin{proof}
We first treat the case of $X\subs\Cset$.

(i)\Implies(iii) follows immediately from Proposition~\ref{prodPnull}(ii), since
the Lebesgue measure on $\Cset$ is $\hm^1$.

(iii)\Implies(ii) follows from the fact that the mapping $(x,y)\mapsto x+y$ is clearly
Lipschitz, hence $\hm^1(X+N)\leq\hm^1(X\times N)=0$

It remains to prove (ii)\Implies(i).
Suppose that $X\subs\Cset$ is null-additive.
Let $\phi$ be a gauge. We verify that $\ubox^\phi(X)=0$.
Choose $F\in\UPset$ such that $F(n)\phi(2^{1-n})\to0$.
Define recursively $f\in\UPset$ subject to
$$
  2^{f(n)}\, (n+1)!\leq
  F(f(n+1)),\quad n\in\Nset.
$$
Clearly
\begin{equation}\label{fuj}
  \forall k\ \forall n<k\quad 2^{f(n)}\, k!\leq F(f(k)).
\end{equation}
At this point we make a heavy use of the Shelah's \cite[Theorem 2.7.18]{MR1350295}:
since $X$ is null-additive, there is a sequence $\seq{H_n:n\in\Nset}$
such that
\begin{enum}
\item[(a)] $\forall n\ H_n\subs 2^{[f(n),f(n+1))}$,
\item[(b)] $\forall n\ \abs{H_n}\leq n$,
\item[(c)] $X\subs\{x\in\Cset:\forall^\infty n\ x\rest [f(n),f(n+1))\in H_n\}$.
\end{enum}
Set
$$
  X_n=\{x\in\Cset:\forall k\geq n\ x\rest[f(k),f(k+1))\in H_k\},\quad n\in\Nset.
$$
We verify that $N_{X_n}(2^{-i})\leq F(i)$
for each $n$ and all $i\geq f(n+1)$. Let $k$ be the unique integer satisfying
$f(k)\leq i<f(k+1)$. In particular, $k>n$. It is obvious that
$$
  N_{X_n}(2^{-i})\leq 2^{f(n)}
  \abs{H_n}\cdot\abs{H_{n+1}}\dots\abs{H_k}\leq
  2^{f(n)}n(n+1)\dots k\leq 2^{f(n)}k!.
$$
Therefore~\eqref{fuj} yields $N_{X_n}(2^{-i})\leq F(f(k))\leq F(i)$ and hence
\[
  \ubox_0^\phi(X_k)=\varlimsup_{r\to0}N_{X_k}(r)\cdot\phi(r)
  \leq\varlimsup_{i\to\infty}F(i)(2^{-i})\phi(2^{1-i})=0.
\]
By (c) $\{X_n:n\in\Nset\}$ is a cover of $X$. Therefore
$\ubox^\phi(X)=0$. Since this holds for any gauge $\phi$, it follows that $X$ has \sssmz{}.
We proved (ii)\Implies(i) for $X\subs\Cset$.

We now show the theorem for $X\subs\Rset$. (i)\Implies(iii)\Implies(ii) is proved exactly as above.
We show (ii)\Implies (i).

Since all properties in question are \si additive, we may suppose that $X\subs[0,1)$.
By~\cite[Theorem 13]{MR3241126} there is a set $A\subs\Cset$
and a Lipschitz surjection $H:A\to[0,1)$ such that preimages of null-additive sets
are null-additive. So supposing $X\subs[0,1)$ is null-additive,
it follows that $H^{-1}(X)\subs\Cset$ is null-additive and thus by the above
is \sssmz{}. Since $H$ is Lipschitz, $X=H(H^{-1}(X))$ is also
\sssmz{}. 

If $X\subs\Tset$, We just need to show that if $X$ is null-additive in $\Tset$, then it is null-additive
in $\Rset$, which is straightforward.
\end{proof}
\begin{coro}
Let $T\colon\Cset\to[0,1]$ be the canonical bijection given by
$T(x)=\sum_{n\in\Nset}2^{-n-1}x(n)$.
Then $X\subs\Cset$ is null-additive if and only if $T(X)$ is null-additive.
\end{coro}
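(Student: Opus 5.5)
By Theorem~\ref{mainNN}, applied once with $\grG=\Cset$ and once with $\grG=\Rset$, null-additivity of $X\subs\Cset$ is the same as $X$ having \sssmz{}, and null-additivity of $T(X)\subs[0,1]\subs\Rset$ is the same as $T(X)$ having \sssmz{}. The corollary therefore reduces to a statement purely about strict measure zero:
\[
  X\in\sssmz(\Cset)\iff T(X)\in\sssmz(\Rset).
\]
The plan is to prove the two directions separately by metric means.

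The forward direction is straightforward. In the least difference metric on $\Cset$, if $d(x,y)=2^{-k}$ then $x$ and $y$ agree on the first $k$ coordinates. Hence
\[
  \abs{T(x)-T(y)}\leq\sum_{n\geq k}2^{-n-1}=2^{-k},
\]
so $T$ is $1$-Lipschitz, in particular uniformly continuous. Proposition~\ref{unifcont} then shows that $T(X)$ has \sssmz{} whenever $X$ does.

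The converse is where the real work lies, because $T^{-1}$ is not uniformly continuous: it is discontinuous at the dyadic rationals. I would get around this by exploiting the fact that \sssmz{} is a \si ideal. Let $D$ be the countable set of points of $\Cset$ that are eventually constant. Points of $D$ are the only ones that can share a $T$-image with another point, and $X\cap D$ is countable and hence trivially \sssmz{}. So it suffices to handle $X\setminus D$. For that, I would localize and then compensate with a change of gauge.

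\textbf{Localizing.} For $m\in\Nset$, let $C_m$ be the set of $x\in\Cset$ having no run of $m$ equal consecutive digits beyond position $m$. Every $x\notin D$ lies in some $C_m$? This is false: a non-eventually-constant sequence can still have arbitrarily long runs. So this first attempt fails, and I would switch to a gauge-level argument instead.

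\textbf{Gauge-level argument.} Suppose $\abs{T(x)-T(y)}\leq 2^{-j}$. Then the images are either in the same dyadic interval of length $2^{-j}$ or in adjacent ones. It follows that a set $A\subs[0,1]$ of diameter at most $2^{-j}$ has $T^{-1}(A)$ contained in at most $2$ cones of length $j$. Consequently
\[
  N_{T^{-1}(E)}(2^{-j})\leq 2N_E(2^{-j})\quad\text{for every }E\subs[0,1].
\]
This inequality gives $\ubox_0^\phi(T^{-1}(E))\leq c\,\ubox_0^{\phi'}(E)$, where $\phi'$ is a comparable gauge (obtained by moving between dyadic and arbitrary scales, losing at most a factor of $2$ in the argument). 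Using the covers from Lemma~\ref{upsilon}, it follows that $T(X)$ \sssmz{} implies $\ubox^\phi(X)=0$ for every $\phi$, and hence that $X$ is \sssmz{}.

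The main obstacle I expect is the careful bookkeeping in passing from dyadic scales to general $\eps$ in the definition of $\ubox^\phi_0$. Since $\phi$ is only right-continuous and non-doubling, this step should be handled by replacing $\phi$ with $\phi'(r)=\phi(4r)$, and I would check that this replacement is harmless because the statement quantifies over all gauges.
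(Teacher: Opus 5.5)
Your overall route is the paper's. You reduce to \sssmz{} via Theorem~\ref{mainNN}, get the forward direction from $T$ being $1$-Lipschitz plus Proposition~\ref{unifcont}, and get the converse by comparing covering numbers of $T^{-1}(E)$ and $E$ and passing to $\ubox^\phi$.

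However, your key counting claim is false as stated. A set $A\subs[0,1]$ of diameter $\leq 2^{-j}$ need not have $T^{-1}(A)$ inside two cones of length $j$. Take $j=2$ and $A=\{\frac14,\frac12\}$. Then $T^{-1}(A)=\{01000\ldots,\,00111\ldots,\,1000\ldots,\,0111\ldots\}$, which meets the three cones $\cyl{00},\cyl{01},\cyl{10}$. Points in distinct cones of length $2$ are at distance $\geq 2^{-1}$, so $N_{T^{-1}(A)}(2^{-2})=3>2N_A(2^{-2})$. Your pairwise observation about adjacent dyadic intervals does not yield the two-cone bound. The reason is that $T$ glues the right end of each cone to the left end of the next, so a preimage of a dyadic endpoint can lie in the neighbouring cone; the cone containing $x$ is not determined by $T(x)$.

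There are two easy repairs. The first is to accept the constant $3$, which is what the paper does. It observes that for every $E\subs[0,1]$, $T^{-1}(E)$ is covered by three sets of diameter at most $\diam E$: for $\diam E>0$, take the cones of the level $k$ with $2^{-k}\leq\diam E<2^{1-k}$. This gives $N_{T^{-1}(Y)}(\del)\leq 3N_Y(\del)$ at every scale, hence $\ubox^\phi(T^{-1}(Y))\leq3\ubox^\phi(Y)$ for the same gauge, with no dyadic bookkeeping. The second is to apply your two-cone bound only to $T^{-1}(A)\setminus D$, where it is correct: non-dyadic points of an interval of length $\leq2^{-j}$ lie in at most two open dyadic intervals of level $j$. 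This is exactly where your reduction to $X\setminus D$ should be used, since as written it is never invoked in the gauge argument.

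With either repair, your dyadic-scale version goes through with $\phi'(r)=\phi(2r)$; your $\phi(4r)$ also works. Indeed $\phi'$ is again a gauge, and for $\del\in[2^{-j},2^{1-j})$ one has $\phi(\del)N(\del)\leq\phi'(2^{-j})N(2^{-j})$. The abandoned ``localizing'' paragraph should simply be deleted.
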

\begin{proof}
If $X\subs\Cset$ is null-additive, then it is \sssmz. Since $T$ is uniformly continuous,
$T(X)\subs[0,1]$ is also \sssmz{} and hence null-additive.

In the other direction, if $Y\subs[0,1]$ is null-additive, then it is \sssmz.
As shown in~\cite{invariants2}, for every set $E\subs[0,1]$ the preimage $T^{-1}(E)$ is covered by
three sets whose diameters are no more than that of $E$. Therefore
$N_{T^{-1}(Y)}(\del)\leq 3N_{Y}(\del)$ for each $\del>0$ and that is enough for
$\ubox^\phi(T^{-1}(Y))\leq 3\ubox^\phi(Y)$ for each gauge $\phi$, and that in turn suffices
to conclude that since $Y$ is \sssmz, so is $T^{-1}(Y)$ and a fortiori
$T^{-1}(Y)$ is null-additive.
\end{proof}
%

\section{Null-additive sets in locally compact Polish groups}
\label{sec:nadditive}

The notion of null-additive sets easily extends to locally compact Polish metric groups.
In this section we look at the relationship of strict measure zero and null-additivity
in these groups. Let us first notice that  \sssmz{} in these groups is independent of
a particular metric. The routine proof is skipped.
\begin{prop}
Let $\grG$ be a locally compact metric group. Then $X\subs\grG$ has \sssmz{}
if and only if for any sequence $\seq{G_n:n\in\Nset}$ of neighborhoods of $1_\grG$ there is
a sequence $\seq{F_n:n\in\Nset}$ of sets in $\grG$ such that $\abs{F_n}\leq n$
and $\seq{F_nG_n:n\in\Nset}$ is a $\gamma$-cover of $X$.

In particular, whether $X$ has \sssmz{} does not depend on the metric on $\grG$.
\end{prop}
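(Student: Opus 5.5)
The plan is to derive this from Theorem~\ref{thm:sssmz}(iii), which characterizes \sssmz{} through sets $F_n$ with $\abs{F_n}\leq n$ such that $\seq{B(F_n,\eps_n)}$ is a $\gamma$-cover of $X$. Fix a compatible metric $d$ on $\grG$. By the Birkhoff--Kakutani theorem we may take $d$ left-invariant, so that $B(x,r)=xB(1_\grG,r)$. I first prove the equivalence for such a $d$, and then deduce independence of the metric.

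\emph{Forward direction.} Suppose $X$ has \sssmz{} and $\seq{G_n}$ are neighborhoods of $1_\grG$. Choose $\eps_n>0$ with $B(1_\grG,\eps_n)\subs G_n$. Theorem~\ref{thm:sssmz}(iii) gives sets $F_n$ with $\abs{F_n}\leq n$ such that $\seq{B(F_n,\eps_n)}$ is a $\gamma$-cover of $X$. Left invariance gives $B(F_n,\eps_n)=F_nB(1_\grG,\eps_n)\subs F_nG_n$, so $\seq{F_nG_n}$ is a $\gamma$-cover of $X$.

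\emph{Backward direction.} Suppose the group condition holds and let $\Seqeps$. Put $G_n=B(1_\grG,\eps_n)$. The hypothesis yields $F_n$ with $\abs{F_n}\leq n$ and $F_nG_n=B(F_n,\eps_n)$ forming a $\gamma$-cover. The points of $F_n$ lie in $\grG$ rather than in $X$. However, the proof of (iv)\Implies(i) in Theorem~\ref{thm:sssmz} uses only the covering numbers of the sets $Y_k$, so it goes through unchanged for centers outside $X$. Alternatively, one can first run the argument with $\eps_n/2$ and replace each center by a point of $X$ within $\eps_n/2$ of it, which gives centers in $X$ and radius $\eps_n$. Either way, $X$ has \sssmz{} for the left-invariant metric $d$.

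\emph{Independence of the metric.} The condition in the proposition is purely topological, so it suffices to show that \sssmz{} for an arbitrary compatible metric $\rho$ is equivalent to \sssmz{} for $d$. This is the main obstacle, because the identity map $(\grG,\rho)\to(\grG,d)$ need not be uniformly continuous, so Proposition~\ref{unifcont} cannot be applied directly. I would use local compactness. Write $\grG=\bigcup_m K_m$ with $K_m$ compact. On each $K_m$ the identity is uniformly continuous in both directions, so Proposition~\ref{unifcont} shows that $X\cap K_m$ is \sssmz{} for $\rho$ if and only if it is \sssmz{} for $d$. Since $\sssmz$ is a \si ideal, $X$ is \sssmz{} for $\rho$ if and only if it is \sssmz{} for $d$, and therefore if and only if the group condition holds. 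The same localization shows that left invariance of $d$ is not really needed, provided one works inside compact pieces where balls $B(x,\eps)$ contain translates $xG$ uniformly in $x$.
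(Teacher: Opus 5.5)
Your proposal is correct. The paper omits this proof as routine, and your first two steps are presumably the argument it has in mind: for a left-invariant metric $B(F_n,\eps_n)=F_nB(1_\grG,\eps_n)$, so the group condition is Theorem~\ref{thm:sssmz}(iii) rephrased. Your $\eps_n/2$ device correctly handles centers lying outside $X$.

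Your third step does real work. The paper's ``in particular'' tacitly assumes the characterization holds for every compatible metric. For a non-invariant metric, however, balls are not translates, and the identity between two compatible metrics need not be uniformly continuous, so local compactness must actually be used. Your route handles this correctly: localize to compact pieces, apply Proposition~\ref{unifcont} there, and then use that $\sssmz$ is a \si ideal.

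One small repair is needed. Writing $\grG=\bigcup_m K_m$ with $K_m$ compact requires $\grG$ to be \si compact. That holds in the paper's intended setting of locally compact Polish groups, but not for an arbitrary locally compact metric group (e.g.\ an uncountable discrete group). The fix is easy. A \sssmz{} set is \si totally bounded, hence separable, so its closure $\overline X$ is a separable locally compact metric space and therefore \si compact. You may then take the $K_m$ inside $\overline X$. In the direction that starts from the group condition, first obtain \sssmz{} for the left-invariant $d$ from your second step; that already gives separability of $X$.
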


\begin{defn}
Let $\grG$ be a locally compact separable metric group. Denote by $\mu_\grG$ its left Haar measure
and by $\NN(\grG)$ the \si ideal of Haar null sets.
A set $X\subs\grG$ is called
\emph{null-additive} if $XN\in\NN(\grG)$ for each $N\in\NN(\grG)$,
The family of all null-additive sets in $\grG$ is denoted by $\NA(\grG)$.
\end{defn}
The proof of the main theorem utilizes \emph{weighted Hausdorff measures}
introduced in~\cite[2.10.24]{MR0257325}: let $X$ be a metric space and $E\subs X$.
Consider a family of pairs $\EEE=\{(E_i,w_i):i\in I\}\subs\mc P(X)\times(0,\infty)$.
Its multiplicity at $x\in X$ is defined by $\mult(\EEE,x)=\sum\{w_i:x\in E_i\}$.
The family $\EEE$ is called
\begin{itemyze}
\item a \emph{weighted cover} of $E$ if $\mult(\EEE,x)\geq1$ for all $x\in E$,
\item a \emph{large weighted cover} of $E$ if $\mult(\EEE,x)=\infty$ for all $x\in E$
\end{itemyze}
and if $\diam E_i\leq\del$ for all $i\in I$, then it is termed a
(large) weighted $\del$-cover.
\begin{defn}[{\cite{MR0257325}}]
For a gauge $\phi$ and $\del>0$ let
\[
  \whm_\del^\phi(E)=\inf\Bigl\{\sum_{i\in\omega}w_i\phi(\diam E_i):
  \{(E_i,w_i)\}\text{ is a weighted $\del$-cover of $E$}\Bigr\}.
\]
The \emph{weighted Hausdorff measure induced by $\phi$} is defined by
\[
  \whm^\phi(E)=\sup_{\del>0}\whm_\del^\phi(E).
\]
\end{defn}
It is obvious that $\whm^\phi\leq\hm^\phi$. It is less obvious that if $\phi$ is doubling,
then $\whm^\phi=\hm^\phi$, see \cite{MR0257325,MR1317515}.
We will need a simple lemma that parallels~\cite[Theorem 42]{MR0281862}.
\begin{lem}\label{largweight}
$\whm^\phi(X)=0$ if and only if there is a large weighted cover $\{(E_i,w_i):i\in\Nset\}$
of $X$ such that $\sum_{i\in\Nset} w_i\phi(\diam E_i)<\infty$.
\end{lem}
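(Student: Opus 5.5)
The two implications are handled separately; the reverse one is immediate and the forward one is a diagonal construction over $k$.

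\emph{Reverse implication.} Suppose $\EEE=\{(E_i,w_i):i\in\Nset\}$ is a large weighted cover of $X$ with $S=\sum_i w_i\phi(\diam E_i)<\infty$. Fix $\del>0$ and $\eps>0$. Choose a finite set $J\subs\Nset$ such that $\sum_{i\notin J}w_i\phi(\diam E_i)<\eps$ and, in addition, $\diam E_i\leq\del$ for every $i\notin J$. Such a $J$ exists: $\phi(r)>0$ for $r>0$ and the series converges, so only finitely many $i$ can have $\diam E_i>\del$ with $w_i\geq$ any fixed bound. More carefully, the terms with $\diam E_i>\del$ satisfy $w_i\leq w_i\phi(\diam E_i)/\phi(\del)$, so their total weight is finite.

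The total weight of the indices in $J$ is a finite number $W_J$. Since $\mult(\EEE,x)=\infty$ for $x\in X$, the tail family $\{(E_i,w_i):i\notin J\}$ still has infinite multiplicity at every point of $X$. Hence it is a weighted $\del$-cover, and so
\[
  \whm^\phi_\del(X)\leq\sum_{i\notin J}w_i\phi(\diam E_i)<\eps .
\]
This gives $\whm^\phi_\del(X)=0$ for every $\del$, hence $\whm^\phi(X)=0$. If some $E_i$ of large diameter carries infinite total weight, one can instead pass to the tail starting after a finite index. Alternatively, scale: $\{(E_i,w_i/M)\}$ is still a large weighted cover, so $\whm^\phi_\del(X)\leq S/M$ once the large-diameter terms are discarded, and this tends to $0$.

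\emph{Forward implication.} Suppose $\whm^\phi(X)=0$. Then $\whm^\phi_{1/k}(X)=0$ for each $k$, so for each $k$ we can pick a weighted $1/k$-cover $\EEE_k=\{(E^k_i,w^k_i):i\in\Nset\}$ of $X$ with
\[
  \sum_i w^k_i\phi(\diam E^k_i)<2^{-k}.
\]
(If the infimum is attained only along uncountable families, restrict to the countable subfamilies used in the definition, which is indexed by $\omega$.) The union $\bigcup_k\EEE_k$, re-indexed by $\Nset$, has finite total sum $\sum_k 2^{-k}<\infty$. Its multiplicity at each $x\in X$ is $\sum_k\mult(\EEE_k,x)\geq\sum_k 1=\infty$, so it is a large weighted cover with the required property.

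The main point needing care is the bookkeeping in the reverse direction, namely making sure that the discarded finitely many large sets carry only finite weight so that infinite multiplicity survives. I expect the scaling trick to settle this cleanly.
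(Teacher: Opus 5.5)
Your argument is correct and is essentially the paper's. In the forward direction you take the union of weighted covers with sums below $2^{-k}$. In the backward direction you discard a finite initial segment (or, in your variant, divide all weights by $M$) together with all terms of diameter $>\del$, and note that infinite multiplicity survives.

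The one false step is the claim that a \emph{finite} $J$ can contain every index with $\diam E_i>\del$: there may be infinitely many such sets, for instance carrying weights $2^{-i}$. Drop that claim and use your own bound $\sum_{\diam E_i>\del}w_i\leq S/\phi(\del)$ instead. This bound is exactly the finiteness the paper uses without stating it, and it also makes your hedge about large sets of infinite total weight unnecessary.
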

\begin{proof}
$\Rightarrow$ Suppose $\whm^\phi(X)=0$. For each $n\in\Nset$ there is a weighted $2^{-n}$-cover
$\{(E_i^n,w_i^n):i\in\Nset\}$ such that $\sum_i w_i^n\phi(\diam E_i^n)\leq2^{-n}$.
The weighted cover $\{(E_i^n,w_i^n):n,i\in\Nset\}$ has obviously the required properties.

$\Leftarrow$ Suppose $\EEE=\{(E_i,w_i):i\in\Nset\}$ is a large weighted cover of $X$
such that $\sum_i w_i\phi(\diam E_i)<\infty$.
Let $\del>0$. There is $n$ such that $\sum_{i\geq n}w_i\phi(\diam E_i)<\del$.
Consider the family $\EEE'=\{(E_i,w_i):i\geq n, \diam E_i\leq\del\}$.
It is a weighted cover. Indeed, for each $x\in X$
\begin{align*}
  &\sum\{w_i:x\in E_i, (E_i,c_i)\in\EEE'\}=\sum\{w_i:x\in E_i\}\\
  &-\left(\sum\{w_i:x\in E_i,i<n\}+\sum\{w_i:x\in E_i,dE_i>\del\}\right)
\end{align*}
and since the first term is infinite and the other two are finite, the multiplicity of
$\EEE'$ is $\infty$, and \emph{a fortiori} $\geq1$, at each $x$.
\end{proof}
\begin{thm}\label{hmwhm}
Let $\grG$ be a locally compact Polish group equipped with a left-invariant metric,
$X\subs\grG$ a \sssmz{} set and $N\subs\grG$.
\begin{enum}
\item If $\hm^\phi(E)=0$, then $\hm^\phi(XE)=0$,
\item If $\whm^\phi(E)=0$, then $\whm^\phi(XE)=0$.
\end{enum}
\end{thm}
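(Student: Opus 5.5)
The plan is to reduce to compact pieces, replace $\phi$ by a smaller gauge $\psi\prec\phi$ for which $E$ is still null, and cover $XE$ by translates of small balls times the pieces of a cover of $E$. The extra factor between $\psi$ and $\phi$ pays for the number of translates, which we control through Theorem~\ref{thm:sssmz}.

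\emph{Reduction to compact pieces.} Since $\grG$ is locally compact Polish, it is \si compact. Both $\sssmz$ and the null ideals of $\hm^\phi$ and $\whm^\phi$ are \si ideals, so we may assume $X\subs K$ and $E\subs K$ for a fixed compact $K$.

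\emph{Diameter estimate.} Right translations are uniformly continuous on compact sets, so there is a modulus $g$ with $d(ae,be)\leq g(d(a,b))$ for $a,b\in B(K,1)$ and $e\in K$. By left invariance,
\[
  d(ae,be')\leq d(ae,ae')+d(ae',be')\leq d(e,e')+g(d(a,b)).
\]
Hence $\diam\bigl(B(y,\eps)E_i\bigr)\leq\diam E_i+g(2\eps)$ for $y\in K$ and small $\eps$.

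\emph{Proof of (i).} Suppose $\hm^\phi(E)=0$.
\begin{itemyze}
\item By \cite[Theorem 42]{MR0281862} there is a gauge $\psi\prec\phi$ with $\hm^\psi(E)=0$.
\item As in Lemma~\ref{largweight}, there is a sequence $\seq{E_i}$ with $\sum_i\psi(d_i)<\infty$, where $d_i=\diam E_i$, such that each point of $E$ lies in infinitely many $E_i$. We may assume $d_i\to0$. Sets with $d_i=0$ are handled by the same choice below, with $\phi(d_i+\eta)\leq2^{-i}$ instead of $2\phi(d_i)$.
\item Since $\phi$ is right-continuous, choose $\eps_i>0$ so small that $\phi(d_i+g(2\eps_i))\leq2\phi(d_i)$.
\item Since $\phi/\psi\to0$, the numbers $\psi(d_i)/(2\phi(d_i))$ tend to $\infty$. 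So there is $f\in\UPset$ with $f(i)\cdot2\phi(d_i)\leq\psi(d_i)$ for all large $i$: take $f$ to be a nondecreasing unbounded minorant of these ratios.
\end{itemyze}
Now apply Theorem~\ref{thm:sssmz}(ii) to $f$ and $\seqeps$, picking one point from each set, exactly as in the step (ii)\Implies(iii). This yields finite sets $F_k$ with $\abs{F_k}\leq f(k)$ such that $\seq{B(F_k,\eps_k)}$ is a $\gamma$-cover of $X$. Reindexing the intervals $I_k$ to match the $E_i$ is harmless, because (ii) allows an arbitrary $\seqeps$.

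Take $xe\in XE$. Then $x\in B(F_k,\eps_k)$ for all but finitely many $k$, and $e\in E_k$ for infinitely many $k$. So $xe$ lies in infinitely many of the sets $B(y,\eps_k)E_k$, $y\in F_k$. The total cost of these sets is
\[
  \sum_k\abs{F_k}\,\phi(d_k+g(2\eps_k))\leq\sum_k f(k)\,2\phi(d_k)\leq\sum_k\psi(d_k)+C<\infty.
\]
Thus we have a cover of $XE$ in which every point is covered infinitely often and the $\phi$-costs are summable. Discarding finitely many sets as in Lemma~\ref{largweight} gives $\hm_\del^\phi(XE)$ arbitrarily small, so $\hm^\phi(XE)=0$.

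\emph{Proof of (ii).} The argument is the same with weights. Lemma~\ref{largweight} gives a large weighted cover $\{(E_i,w_i)\}$ of $E$. We replace it by $\{(B(y,\eps_i)E_i,w_i):y\in F_i\}$, and the multiplicity at $xe$ stays infinite by the same $\gamma$-cover argument.

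\emph{Main obstacle.} The delicate point is the weighted analogue of Rogers' theorem: from $\whm^\phi(E)=0$ we need a gauge $\psi\prec\phi$ with $\whm^\psi(E)=0$. I would first try to prove it directly from Lemma~\ref{largweight}. Take the large weighted cover with $\sum w_i\phi(d_i)<\infty$, choose an increasing unbounded weight-multiplier $h(i)$ with $\sum w_i h(i)\phi(d_i)<\infty$, and define $\psi$ to be roughly $h\phi$ along the values $d_i$, made monotone and right-continuous. The difficulty is keeping $\psi$ a genuine gauge, i.e.\ nondecreasing and right-continuous, when the $d_i$ are not monotone in $i$; sorting the cover by decreasing diameter first should take care of that.
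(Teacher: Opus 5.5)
Your skeleton is the paper's. You translate the $n$-th piece of a large (weighted) cover of $E$ by at most $f(n)$ small sets supplied by Theorem~\ref{thm:sssmz}. You bound $\diam(CE_n)$ using left invariance and uniform continuity of right translations, and you finish with Lemma~\ref{largweight}. Part (i) is essentially fine, since Rogers' theorem may be cited.

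Part (ii), however, is not proved. It rests on a weighted Rogers theorem, namely a gauge $\psi\prec\phi$ with $\whm^\psi(E)=0$, which you leave open, and your sketch does not settle it. Sorting the cover by decreasing diameter only turns your multiplier into a nonincreasing function $H$ of the diameter. Then $H\phi$ is still not nondecreasing unless $H$ drops no faster than $\phi$ grows; for instance, it fails if $\phi$ is flat on an interval containing two diameters with different multipliers. So a further regularization would be needed, together with a check that right-continuity and $\sum_i w_i\psi(d_i)<\infty$ survive it.

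The missing observation is that no auxiliary gauge is needed at all. Theorem~\ref{thm:sssmz} lets you prescribe $\abs{F_n}\leq f(n)$ by the \emph{index} $n$ of the cover, not by the diameter $d_n$. So the multiplier $h$ you already mention can be fed in directly as $f$: for any convergent series $\sum_n w_n\phi(d_n)$ there is $f\in\UPset$ with $\sum_n w_nf(n)\phi(d_n)<\infty$.

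Fix this $f$ first, and only then choose $\eps_n$ with $\phi(d_n+g(2\eps_n))<\phi(d_n)+2^{-n}/(w_nf(n))$. The family $\{(B(y,\eps_n)E_n,w_n):y\in F_n,\ n\in\Nset\}$ then has cost at most $\sum_n w_nf(n)\phi(d_n)+\sum_n 2^{-n}<\infty$, and your multiplicity argument applies verbatim.

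This is exactly the paper's proof. It treats (i) and (ii) uniformly (all weights $1$ in (i)), makes Rogers' theorem superfluous even in (i), and absorbs the case $d_n=0$. The order of choices matters there: in your fix for $d_i=0$ the $\eps_i$ are chosen before $f$, and for an arbitrary unbounded minorant $f$ the series $\sum_i f(i)2^{-i}$ need not converge.

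A minor point: $B(K,1)$ need not be compact for a left-invariant metric. Use $B(K,r_0)=K\cdot B(1_\grG,r_0)$ instead, with $r_0$ chosen so that $B(1_\grG,r_0)$ is compact.
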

\begin{proof}
Let $E\subs\grG$ and suppose that $\whm^\phi(E)=0$.
By Lemma~\ref{largweight} there is a large weighted cover $\EEE=\{(E_n,w_n):n\in\Nset\}$
of $E$ such that $\sum_n w_n\phi(\diam E_n)<\infty$.
Omitting finitely many terms we may suppose all $E_n$ be compact.
It is easy to prove that there is $f\in\UPset$ such that
$\sum_n w_n\phi(\diam E_n)f(n)<\infty$.
Since $\phi$ is right-continuous,
there is $\Seqeps$ such that for all $n$
\[
  \phi(\diam E_n+\eps_n)<\phi(\diam E_n)+\frac{2^{-n}}{w_nf(n)}.
\]
\begin{Claim}\label{Claim}
$\forall n\ \exists\del_n>0 \ \diam C\leq\del_n\Rightarrow\diam(CE_n)\leq\diam E_n+\eps_n$.
\end{Claim}
\begin{cproof}
The right shift is continuous, so since $E_n$ is compact, there is $\del_n>0$ such that
$d(x,x')\leq\del_n\Rightarrow\forall y\in E_n\ d(xy,x'y)\leq\eps_n$.
Suppose $C\subs\grG$ and $\diam C\leq\del$.
Let $x,x'\in C$ and $y,y'\in E_n$. Then
\[
  d(xy,x'y')\leq d(xy,x'y)+d(x'y,x'y')\leq\eps+d(y,y')\leq\eps+\diam E_n
\]
and $\diam(CE_n)\leq\diam E_n+\eps$ follows.
\end{cproof}
Consider the sequence $\seq{\del_n}$.
Since $X$ is \sssmz{}, Theorem~\ref{thm:sssmz} yields a sequence $\seq{\CC_n:n\in\Nset}$
of families of sets such that
\begin{enum}
\item[\rm(a)] $\forall n\in\Nset\ \abs{\CC_n}\leq f(n)$,
\item[\rm(b)] $\forall n\in\Nset\ \forall C\in\CC_n\ \diam C<\del_n$,
\item[\rm(c)] $\forall x\in X\ \fmany n\in\Nset\ \exists C\in\CC_n\ x\in C$.
\end{enum}
Let $\AAA=\{(CE_n, c_n):C\in\CC_n,\ n\in\Nset\}$. We show that $\AAA$ witnesses $\whm^\phi(XE)=0$.
Let $z\in XE$. There are $x\in X$, $y\in N$ be such that $z=xy$.
Then
\begin{enum}
\item[\rm(d)] $\sum\{w_n:x\in E_n\}=\infty$,
\item[\rm(e)] $\exists k\ \forall n\geq k\ \exists C\in\CC_n\ y\in C$.
\end{enum}
Therefore
$
  \mult(\AAA,z)\geq\mult(\EEE,x)-\sum\{w_n:n<k\}=\infty,
$
so $\AAA$ is a large weighted cover of $XN$. Moreover
\begin{align*}
  \sum_{(A,w)\in\AAA}w\phi(\diam A)&=\sum_{n\in\Nset}\sum_{C\in\CC_n}w_n\phi(\diam(CE_n)) \\
  &\leq \sum_{n\in\Nset}w_n\phi(\diam E_n+\eps_n)\cdot\abs{\CC_n} \\
  &\leq \sum_{n\in\Nset}w_n\Bigl(\phi(\diam E_n)+\frac{2^{-n}}{w_nf(n)}\Bigr)\cdot f(n) \\
  &= \sum_{n\in\Nset}w_n\phi(\diam E_n)+2^{-n}<\infty,
\end{align*}
so Lemma~\ref{largweight} yields $\whm^\phi(XE)=0$.
We proved (ii). The proof of (i) is identical, we just have to assume that all weights $w_i=1$.
\end{proof}
We are arriving to one of the main theorems of the paper.
\begin{thm}\label{main1}
If $\grG$ is a locally compact Polish group, then $\sssmz(\grG)\subs\NA(\grG)$.
In other words, every \sssmz{} set in $\grG$ is null-additive.
\end{thm}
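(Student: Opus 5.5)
The plan is to reduce Theorem~\ref{main1} to Theorem~\ref{hmwhm}(ii), by showing that left Haar null sets are exactly the $\whm^\phi$-null sets for a suitable gauge $\phi$. Two things are needed: a left-invariant metric on $\grG$ that is compatible with the topology (Birkhoff--Kakutani), and a gauge $\phi$ for which $\NN(\grG)=\{E:\whm^\phi(E)=0\}$. By the Proposition preceding Definition of $\NA(\grG)$, the property \sssmz{} does not depend on the choice of compatible metric, so we may work with this left-invariant metric.

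For the gauge, use local compactness: choose $r_0>0$ so that closed balls of radius at most $r_0$ are compact and have positive Haar measure. Then set $\phi(r)=\mu_\grG(B(1_\grG,r))$ for $r\leq r_0$, extend it monotonically beyond $r_0$, and adjust it to be right-continuous; closed balls make it right-continuous automatically. By left invariance $\mu_\grG(B(x,r))=\phi(r)$ for every $x$, so $\mu_\grG$ is a uniformly distributed (translation-invariant ball-measure) Borel measure.

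The key step, and the main obstacle, is to show that $\mu_\grG(E)=0$ if and only if $\whm^\phi(E)=0$. I expect this to hinge on the density/covering theory for weighted Hausdorff measures, and I would try to prove it in two directions.

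\emph{Direction 1: $\whm^\phi$-null implies $\mu_\grG$-null.} Take a weighted $\del$-cover $\{(E_i,w_i)\}$ of $E$. Each $E_i$ lies in a ball $B(x_i,\diam E_i)$ of measure $\phi(\diam E_i)$. Integrating the multiplicity gives
\[
\mu_\grG(E)\leq\int\mult\,d\mu_\grG\leq\sum_i w_i\,\phi(2\diam E_i).
\]
This has a doubling issue: it uses $\phi(2\diam E_i)$ rather than $\phi(\diam E_i)$. To avoid it, I would use the definition with centered balls. Since $E_i\subs B(x,\diam E_i)$ for any $x\in E_i$, we can replace the cover by balls of radius $\diam E_i$, and then compare against $\phi$ evaluated at the radius. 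Redefining $\phi(r)=\mu_\grG(B(1_\grG,r/2))$ would make a set of diameter $d$ have measure at most $\phi(2d)$, which still leaves the doubling factor. The cleanest route is to note that for Lemma~\ref{largweight} one only needs $\sum w_i\psi(\diam E_i)<\infty$ for some gauge $\psi$ comparable to $\mu_\grG$ of the sets. So I would define the gauge as $\phi(r)=\mu_\grG(B(1_\grG,r))$ and treat $E_i$ through the enclosing ball $B(x_i,\diam E_i)$, whose measure is exactly $\phi(\diam E_i)$. Then $\mu_\grG(E)\leq\sum_i w_i\phi(\diam E_i)$, and infinitesimal smallness follows.

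\emph{Direction 2: $\mu_\grG$-null implies $\whm^\phi$-null.} If $\mu_\grG(E)=0$, cover $E$ by an open set $U$ of small measure. A Vitali-type covering argument, or more simply a weighted cover built from a Besicovitch-free argument, should produce a weighted cover by balls of total weight comparable to $\mu_\grG(U)$. Here I would invoke the Federer density theorem for uniformly distributed measures, 2.10.19 and 2.10.24 of~\cite{MR0257325}: if $\mu_\grG(B(x,r))=\phi(r)$ for all $x$ and small $r$, then $\whm^\phi$ and $\mu_\grG$ have the same null sets, up to bounded constants. This is the step I expect to be hardest, since no doubling is available in general, and weighted measures are precisely the tool that circumvents doubling in Federer's 2.10.24 comparison. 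If that route stalls, I would fall back to checking directly that $\whm^\phi\leq\mu_\grG$ via finite weighted covers from compactness.

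Granting this equivalence, the conclusion is immediate. Let $X$ be \sssmz{} and $N\in\NN(\grG)$. Then $\whm^\phi(N)=0$, Theorem~\ref{hmwhm}(ii) gives $\whm^\phi(XN)=0$, and hence $\mu_\grG(XN)=0$, i.e.\ $X\in\NA(\grG)$.
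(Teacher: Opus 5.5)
Your Direction~1 is fine: $E_i\subs B(x_i,\diam E_i)$, which has Haar measure $\phi(\diam E_i)$, so $\mu_\grG\leq\whm^\phi$ with no doubling loss. The gap is Direction~2. For the ball gauge $\phi(r)=\mu_\grG(B(1_\grG,r))$ it is false, and it is exactly the step needed to feed a Haar null $N$ into Theorem~\ref{hmwhm}(ii). The comparison you attribute to Federer needs a doubling-type condition on $\phi$, and that condition fails in general locally compact groups.

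Concretely, take $\grG=\Tset^\Nset$ with the invariant metric $d(x,y)=\sup_n2^{-n}\lVert x_n-y_n\rVert$, where $\lVert t\rVert$ is the distance from $t$ to the nearest integer. Then $\phi(r)=\prod_{n:\,2^{-n}>2r}2^{n+1}r$. Let $E=[0,\tfrac12]^\Nset$, which is Haar null, and let $\nu$ be the product of the normalized Lebesgue measures on $[0,\tfrac12]$. If $\diam F\leq r$, then $\pi_n(F)$ has diameter at most $2^nr$. Hence $\nu(\overline F)\leq\prod_{n:\,2^{-n}>4r}2^{n+1}r\leq2\phi(r)$, because the two index sets differ in at most one $n$, and for that $n$ the factor $2^{n+1}r$ is at least $\tfrac12$. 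Integrating the multiplicity of any weighted cover $\{(E_i,w_i)\}$ of $E$ against $\nu$ gives $1\leq\sum_iw_i\nu(\overline{E_i})\leq2\sum_iw_i\phi(\diam E_i)$. So $\whm^\phi(E)\geq\tfrac12$ even though $\mu_\grG(E)=0$. Your fallback $\whm^\phi\leq\mu_\grG$ fails as well: in this example $\whm^\phi$ is infinite on every set of positive Haar measure.

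The missing idea is the choice of gauge. The paper uses a Struble metric and
$\phi(r)=\sup\{\mu_\grG(E):\diam E\leq r\}$. Right-continuity then needs a compactness argument via Blaschke selection. The paper then invokes Bandt's identity $\whm^\phi=\mu_\grG$.

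For this gauge, $\mu_\grG\leq\whm^\phi$ is immediate, since $\mu_\grG(E_i)\leq\phi(\diam E_i)$ by definition. The reverse inequality comes from averaging left translates $xS$ of a set $S\ni1_\grG$ with $\diam S\leq r$ and $\mu_\grG(S)$ close to $\phi(r)$:
\begin{itemize}
\item A point $y$ lies in $xS$ exactly when $x\in yS^{-1}$, a set of measure $\mu_\grG(S^{-1})\approx\mu_\grG(S)$, since the modular function is near $1$ on small sets.
\item So these translates, weighted by $d\mu_\grG(x)/\mu_\grG(S^{-1})$ and discretized by compactness, cover a compact $K$ at cost roughly $\mu_\grG(KB(1_\grG,r))$.
\item That cost tends to $\mu_\grG(K)$ as $r\to0$.
\end{itemize}
No doubling constant enters, because the covering sets themselves almost realize the gauge. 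With this $\phi$ in place of yours, your final paragraph goes through unchanged.
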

\begin{proof}
The proof requires a structural description of the Haar measure that can be traced down
to the Bandt's paper~\cite{MR677233}.
By Struble's theorem~\cite{MR348037} every locally compact metric group has a left-invariant
metric such that every closed set with a finite diameter is compact.
Provide $\grG$ with such a metric $d$ (let's call is a \emph{Struble metric})
and define a function
\begin{equation}\label{bandt1}
  \phi(r)=\sup\{\mu_\grG(E):\diam E\leq r\},\ r\geq 0.
\end{equation}
Clearly, $\phi$ is nondecreasing and takes only finite values. It is right-continuous:
otherwise there is $r,\eps>0$ and a sequence
of sets $E_n$ such that $\diam E_n{\searrow} r$ and $\mu(E_n)\geq\phi(r)+\eps$.
By left-shifting $E_n$'s we may suppose that there is compact set $C$ containing all of them.
Using Blaschke Selection Theorem, choose a subsequence that converges in Hausdorff metric to
a set $E\subs C$.
Then $E=\bigcap_{n\in\Nset}\overline{\bigcup_{m\geq n}E_n}$, so $\mu(E)\geq\phi(r)+\eps>0$ and
at the same time $\diam E\leq r$: a contradiction.
So $\phi$ is a gauge. Therefore it makes sense to consider the measure $\whm^\phi$.
The following surprising link of Haar measure and $\whm^\phi$ can be extracted from the
proofs in~\cite{MR677233}:
\begin{equation}\label{bandt2}
  \whm^\phi=\mu_\grG.
\end{equation}
It remains to apply Theorem~\ref{hmwhm}(ii).
\end{proof}

\begin{defn}
A locally compact Polish group $\grG$ is called a \GMS{} group if $\NA(\grG)\subs\sssmz(\grG)$,
or, equivalently, if $\NA(\grG)=\sssmz(\grG)$.%
\footnote{In~\cite{MR3453581}, a Polish group $\grG$ is called a $\mathsf{GMS}$ group
if it satisfies the Galvin-Mycielski-Solovay Theorem, i.e., a set
$X\subs\grG$ has strong measure zero if and only if $XM\neq\grG$ for all meager sets $M$.}
\end{defn}
\begin{thm}\label{main2}
A product of finitely many \GMS{} groups is a \GMS{} group.
\end{thm}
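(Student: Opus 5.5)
By induction it suffices to treat two factors. So let $\grG,\grH$ be \GMS{} groups and $X\subs\grG\times\grH$ a null-additive set; we must show that $X$ is \sssmz{}. Throughout, $\grG\times\grH$ carries the maximum metric built from left-invariant (Struble) metrics on the factors. The Haar measure of the product is $\mu_\grG\otimes\mu_\grH$.

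The first step is to pass to the projections. Let $\pi_1,\pi_2$ be the coordinate projections and put $X_1=\pi_1(X)$, $X_2=\pi_2(X)$. I claim $X_1\in\NA(\grG)$. Given $N\in\NN(\grG)$, the set $N\times\grH$ is Haar null in the product, so $X(N\times\grH)$ is null. Moreover $X(N\times\grH)=X_1N\times\grH$: the inclusion $\subs$ is clear, and for $\supseteq$, given $x_1n$ with $x_1=\pi_1(x)$, $x=(x_1,x_2)\in X$, and any $h$, we have $(x_1n,h)=x\cdot(n,x_2^{-1}h)$. By Fubini, $X_1N\times\grH$ null forces $X_1N$ null; if $X_1N$ is not measurable, apply Fubini to a Borel null hull of $X(N\times\grH)$. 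So $X_1\in\NA(\grG)$, and by symmetry $X_2\in\NA(\grH)$.

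Next, since $\grG$ and $\grH$ are \GMS{}, we get $X_1\in\sssmz(\grG)$ and $X_2\in\sssmz(\grH)$. Theorem~\ref{products}(i) then gives $X_1\times X_2\in\sssmz(\grG\times\grH)$, and since $X\subs X_1\times X_2$ and $\sssmz$ is a \si ideal, $X$ is \sssmz{}. This also justifies the word ``equivalently'' in the definition via Theorem~\ref{main1}.

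The only delicate step is the measurability and Fubini argument in the first step. It is harmless: the outer Haar measure of $A\times\grH$ is zero only if $A$ is null, as one sees by taking a Borel null superset $B$ of $A\times\grH$ and noting that every vertical section over $A$ is all of $\grH$. If the metric matters at all, the proposition stating that \sssmz{} is independent of the metric shows that the maximum metric is legitimate.
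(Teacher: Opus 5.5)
Your proposal is correct and follows the paper's proof: you show the projections are null-additive by applying Fubini to $X(N\times\grH)=X_\grG N\times\grH$, so by the \GMS{} hypothesis they are \sssmz{}, and Theorem~\ref{products}(i) finishes the argument since $X\subs X_\grG\times X_\grH$. You also spell out two details the paper leaves implicit: the verification of that set identity, and the Borel-null-hull argument that justifies Fubini for possibly non-measurable sets.
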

\begin{proof}
Let $\grG$ and $\grH$ be \GMS{} groups and $X\subs\grG\times\grH$ a null-additive set.
Let $X_\grG$ be the projection of $X$ on $\grG$.
If $N\subs\grG$ is a Haar null set, then $N\times\grH$ is a Haar null set in $\grG\times\grH$.
Therefore $X\cdot(N\times\grH)$ is a Haar null set in $\grG\times\grH$.
Since $X\cdot(N\times\grH)=(X_\grG\cdot N)\times\grH$,
it follows from Fubini Theorem that $X_\grG\cdot N$ is Haar null in $\grG$.
We conclude that $X_\grG$ is null-additive.
By the same argument, the projection $X_\grH$ of $X$ on $\grH$ is also null-additive.

Therefore, since both $\grG$ and $\grH$ are \GMS{} groups, it follows that both
$X_\grG$ and $X_\grH$ are \sssmz{} sets.
By Theorem~\ref{products}(i), $X_\grG\times X_\grH$ is a \sssmz{} set and
since $X\subs X_\grG\times X_\grH$, so is $X$.
We proved that every null-additive set in $\grG\times\grH$ is \sssmz.
The reverse inclusion follows from Theorem~\ref{main1}.
\end{proof}
The following theorem is a trivial consequence of Theorems~\ref{mainNN} and~\ref{main2}.
\begin{thm}\label{euclid}
$\Cset,\Rset^n,\Tset^n$ are \GMS{} groups for all $n$.
\end{thm}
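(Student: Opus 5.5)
The plan is to combine Theorem~\ref{mainNN} with Theorem~\ref{main2} by induction on $n$, after first settling the one-dimensional cases.

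\emph{Base cases.} Theorem~\ref{mainNN} shows that for $\grG\in\{\Cset,\Rset,\Tset\}$ a set $X\subs\grG$ is null-additive if and only if it is \sssmz{}. Hence $\NA(\grG)\subs\sssmz(\grG)$, which is exactly the definition of a \GMS{} group. Before invoking Theorem~\ref{mainNN}, I will check that its meaning of null-additivity (with respect to Lebesgue measure on $\Rset$ or $\Tset$, and the product measure $\hm^1$ on $\Cset$) agrees with the Haar-measure definition used in this section. It does, because in each case these measures are the Haar measures up to a constant factor. I also need the metrics to match, but by the Proposition at the start of this section, \sssmz{} does not depend on the choice of metric on a locally compact metric group.

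\emph{Induction.} Assume $\Rset^n$ and $\Tset^n$ are \GMS{} groups. Since $\Rset^{n+1}=\Rset^n\times\Rset$ and $\Tset^{n+1}=\Tset^n\times\Tset$ as topological groups, Theorem~\ref{main2} shows that $\Rset^{n+1}$ and $\Tset^{n+1}$ are \GMS{} groups as well.

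Two compatibility points remain. First, the Haar measure on the product group $\grG\times\grH$ is the product of the Haar measures on the factors; Theorem~\ref{main2} already relies on this through its Fubini step. Second, \sssmz{} in the product must be computed with respect to the maximum metric, which is the metric under which Theorem~\ref{products} was proved. By the metric-independence Proposition, this gives the same class as any other compatible metric on $\Rset^{n+1}$ or $\Tset^{n+1}$, for example the Euclidean one.

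I do not expect a real obstacle here. The only step requiring care is the identification of measures and metrics between the earlier results and the present setting, and that is routine.
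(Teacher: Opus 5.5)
Your proposal is correct and is the paper's argument: the paper calls Theorem~\ref{euclid} a trivial consequence of Theorem~\ref{mainNN}, which gives the one-dimensional cases $\Cset$, $\Rset$, $\Tset$, and of Theorem~\ref{main2}, which gives closure of \GMS{} groups under finite products, and that is your base case plus induction. Your compatibility remarks are routine and handled correctly: Haar versus Lebesgue/$\hm^1$ null ideals, and metric-independence of \sssmz{}, which makes the maximum metric used in Theorem~\ref{products} harmless.
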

Shelah~\cite{MR1324470} proved that in $\Cset$ every null-additive set meager-additive.
This generalizes to \GMS{} groups:
\begin{coro}
A null-additive set in a \GMS{} group is meager-additive.
\end{coro}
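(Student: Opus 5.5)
Let $\grG$ be a \GMS{} group and $X\subs\grG$ a null-additive set. Since $\grG$ is \GMS{}, $X$ is \sssmz{}. So it suffices to prove that every \sssmz{} set in a locally compact Polish group is meager-additive, i.e.\ that $XM$ is meager for every meager $M\subs\grG$. The simplest route goes through the diagram in the introduction. By Lemma~\ref{basix}(i), $\hm^\phi\leq\ubox^\phi\leq\pack^\phi$ for every gauge $\phi$. Hence \sssmz{} implies that the upper Hausdorff measure (which is dominated by $\ubox^\phi$, being defined from finite covers at a single scale) vanishes for every gauge. Thus \sssmz{} implies \ssmz{}, and by the result of~\cite{MR4472524} quoted in the introduction, \ssmz{} is equivalent to meager-additivity in locally compact groups with an invariant metric.

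To avoid relying on the existence of a two-sided invariant metric, I would instead give a direct argument modelled on the proof of Theorem~\ref{hmwhm}, with meager sets in place of null sets. Fix a Struble left-invariant metric, so that closed bounded sets are compact. Let $M\subs\grG$ be meager; we may assume $M\subs\bigcup_k K_k$ with each $K_k$ closed nowhere dense. Fix a countable dense set $\{q_j\}$. The aim is to build an increasing sequence of finite sets $D_n$ and radii $r_n>0$ such that the open set $W_n=\bigcup_{q\in D_n}B(q,r_n)$ satisfies two conditions:
\begin{itemize}
\item $W_n$ meets every basic ball of radius $\geq 2^{-n}$ around the first $n$ points $q_j$;
\item $\overline{W_n}$ is disjoint from $(K_0\cup\dots\cup K_n)\cdot C$ for small sets $C$.
\end{itemize}
Then $G=\bigcap_m\bigcup_{n\geq m}W_n$ is comeager. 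Here the Claim in the proof of Theorem~\ref{hmwhm} gives $\del_n>0$ such that $\diam C\leq\del_n$ implies that $CK$ stays within an $\eps_n$-neighbourhood of $K$ on the relevant compact piece.

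Next apply Theorem~\ref{thm:sssmz}(ii), with the function $f(n)=n$ and the sequence $\seq{\del_n}$, to cover $X$ in the $\gamma$-sense by families $\CC_n$ of at most $n$ sets of diameter $\leq\del_n$. The key step, and the main obstacle, is to choose $W_n$ avoiding the at most $n$ translates $C\cdot(K_0\cup\dots\cup K_n)$, $C\in\CC_n$, simultaneously. Since $\CC_n$ is not known when $W_n$ is chosen, this forces the order of construction to be: first fix finitely many candidate translates via the total boundedness of compact pieces, as in condition (b) of the proof of Theorem~\ref{Galvins}; then pick $W_n$ dense enough to avoid any $n$ of them. Nowhere density of a finite union of translates makes this possible.

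It then follows that $x\cdot G$ is disjoint from $xM$-type translates for $x\in X$, where one uses left or right translations appropriately, arranged via the Claim so that the sets $CK$ appear. Consequently $XM$ is contained in the complement of a comeager set, and so it is meager. If the bookkeeping of the direct argument proves too heavy, I will fall back on the first paragraph's reduction via \ssmz{} and~\cite{MR4472524}.
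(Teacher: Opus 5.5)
Your first paragraph is the paper's proof. In a \GMS{} group a null-additive set is \sssmz{}; \sssmz{} trivially gives \ssmz{}, and comparing the upper Hausdorff premeasure with $\ubox^\phi_0$ is the right justification; and \ssmz{} sets are meager-additive by \cite{MR4472524}.

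The one correction is the scope of that citation. You only need the direction \ssmz{} $\Rightarrow$ meager-additive, which is \cite[Theorem~4.1]{MR4472524} and holds in every locally compact group. There \ssmz{} is equivalent to sharply $\MM$-additive: for each meager $M$ some \si compact $C\supseteq X$ has $CM$ meager, so $XM\subs CM$ is meager. The invariant metric is needed only for the converse. Hence \GMS{} groups without a two-sided invariant metric are already covered, and the fallback is unnecessary.

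The fallback would not work as sketched anyway. A single $W_n$ fixed before $\CC_n$ that avoids ``any $n$'' of the candidate translates must avoid all of them. Since the Claim only puts $CK$ inside $B(cK,\eps_n)$, a neighbourhood of a translate $cK$ rather than of $K$, it must avoid their neighbourhoods too. Every singleton $\{c\}$ with $c\in L\supseteq X$ is an admissible small set, so such a $W_n$ must avoid all of $LK$. That set can have interior: in $\Rset$ with $K=\{0\}$ and $L=[0,1]$ it is $[0,1]$. Nowhere density of finitely many exact translates does not help, and refining the net only shrinks the room, so the construction is circular.

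The repair is to reverse the order. $W_n$ may be chosen after $\CC_n$ is known, since the comeager set is allowed to depend on $X$; only $\del_n$ must be fixed in advance. You get it from compactness of $L^n$, with $K=K_0\cup\dots\cup K_n$ compact nowhere dense: there is $\eps_n>0$ such that for every $(c_1,\dots,c_n)\in L^n$, each of the first $n$ basic open sets contains a ball of radius $\eps_n$ disjoint from $\bigcup_i B(c_iK,\eps_n)$. Then take $\del_n$ from the Claim. Keep the factor from $X$ on the left throughout ($CK$, not $KC$). With these changes the direct argument just reproves the cited theorem.
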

\begin{proof}
A null-additive set in a \GMS{} group is \sssmz{} and trivially every \sssmz{} set is \ssmz{}.
By~\cite[Theorem 4.1]{MR4472524}, every \ssmz{} set in a locally compact group is meager-additive.
\end{proof}
\begin{coro}\label{null1}
Let $\grG,\grH$ be locally compact Polish groups.
\begin{enum}
\item If $\grG,\grH$ are \GMS{} and $X\subs\grG$, $Y\subs\grH$ are null-additive,
then so is $X\times Y$.
\item If $\grG$ is \GMS{}, $X\subs\grG$ is null-additive and $f:\grG\to\grH$ is continuous,
then $f(X)$ is null-additive.
\end{enum}
\end{coro}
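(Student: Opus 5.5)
Both parts reduce to statements about strict measure zero. The passage in each direction uses that $\grG$ and $\grH$ are \GMS{}, or Theorem~\ref{main1}, which holds for every locally compact Polish group.

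For (i), assume $\grG,\grH$ are \GMS{} and $X\subs\grG$, $Y\subs\grH$ are null-additive. Since both groups are \GMS{}, $X\in\sssmz(\grG)$ and $Y\in\sssmz(\grH)$. We give $\grG\times\grH$ the maximum metric built from the metrics on the factors; by the metric-independence proposition at the start of this section, the choice of metric does not affect \sssmz{}. Theorem~\ref{products}(i) then shows that $X\times Y$ is \sssmz{} in $\grG\times\grH$. The product $\grG\times\grH$ is again a locally compact Polish group, so Theorem~\ref{main1} gives that $X\times Y$ is null-adddsitive. (Alternatively, Theorem~\ref{main2} makes $\grG\times\grH$ a \GMS{} group and gives the same conclusion; only the direction \sssmz{}$\Rightarrow$null-additive is actually needed.)

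For (ii), assume $\grG$ is \GMS{} and $X\subs\grG$ is null-additive. Then $X$ is \sssmz{}. We want to apply Proposition~\ref{unifcont}, but $f$ is only continuous, so the main point is to get uniform continuity locally. Equip $\grG$ with a Struble metric, so closed bounded sets are compact, and cover $\grG$ by countably many compact balls $K_m$. On each $K_m$ the map $f$ is uniformly continuous by compactness. Each $X\cap K_m$ is \sssmz{} because $\sssmz(\grG)$ is a \si ideal, so Proposition~\ref{unifcont} makes $f(X\cap K_m)$ \sssmz{} in $\grH$. Then $f(X)=\bigcup_m f(X\cap K_m)$ is a countable union of \sssmz{} sets, hence \sssmz{}. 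Theorem~\ref{main1} applied to $\grH$ shows $f(X)$ is null-additive.

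The only delicate point is the passage from continuity to uniform continuity in (ii). It is handled by \si compactness of $\grG$ and the \si ideal property of \sssmz{}. We also use that \sssmz{} in $\grH$ does not depend on the chosen metric, which is the metric-independence proposition above. Note that $\grH$ need not be \GMS{} in (ii).
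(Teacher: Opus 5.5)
Your proof is correct and follows the paper's route. Part (i) goes through ``null-additive in a \GMS{} group implies \sssmz{}'', then Theorem~\ref{products}(i), then Theorem~\ref{main1} on $\grG\times\grH$; part (ii) uses Proposition~\ref{unifcont} and Theorem~\ref{main1}, with your restriction of the merely continuous $f$ to the compact balls $K_m$ (where $f\rest K_m$ is uniformly continuous) together with the \si ideal property of $\sssmz(\grH)$ supplying a step that the paper's one-line proof leaves implicit.
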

\begin{proof}
(i) is immediate from Theorem~\ref{products}(i) and
(ii) from Proposition~\ref{unifcont} and Theorem~\ref{main1}.
\end{proof}
\begin{coro}\label{null2}
Let $\grG$ be a \GMS group and $X,N\subs\grG$. If $X$ is null-additive, then
\begin{enum}
\item If $\hm^\phi(N)=0$, then $\hm^\phi(XN)=0$,
\item $\hdim(XN)=\hdim N$.
\end{enum}
\end{coro}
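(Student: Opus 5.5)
Since $\grG$ is a \GMS{} group, the null-additive set $X$ is \sssmz{}, so both items should follow from Theorem~\ref{hmwhm}. For this I equip $\grG$ with a left-invariant metric (for instance a Struble metric, as in the proof of Theorem~\ref{main1}). Both $\hm^\phi$ and $\hdim$ are understood with respect to this fixed metric.

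For (i), suppose $\hm^\phi(N)=0$. Theorem~\ref{hmwhm}(i), applied to the \sssmz{} set $X$ and to $E=N$, gives $\hm^\phi(XN)=0$ at once. The only point to check is that the proof of Theorem~\ref{hmwhm} works for an arbitrary set $N$; the reduction to compact $E_n$ should be harmless. If some $E_n$ is not compact, I would replace it by its closure. In a Struble metric, sets of small diameter have compact closure, and closure does not change the diameter, so the covering property is kept. This verification is the only place where some care is needed.

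For (ii), first assume $X\neq\emptyset$ and fix $x_0\in X$. Then $XN\supseteq x_0N$. Since the metric is left-invariant, left translation by $x_0$ is an isometry, so $\hdim(x_0N)=\hdim N$ and hence $\hdim(XN)\geq\hdim N$. For the reverse inequality, take any $s>\hdim N$. Then $\hm^s(N)=0$, so by (i) with $\phi(r)=r^s$ we get $\hm^s(XN)=0$, that is, $\hdim(XN)\leq s$. Letting $s\searrow\hdim N$ gives $\hdim(XN)\leq\hdim N$. If $\hdim N=\infty$, the inequality $\leq$ is trivial and $\geq$ still follows from the translate. (The case $X=\emptyset$ is degenerate and can be excluded.)

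The main obstacle is therefore only the measurability/compactness detail in applying Theorem~\ref{hmwhm}(i) to arbitrary $N$, which the Struble-metric closure argument handles. Everything else follows formally from the \GMS{} property and Theorem~\ref{hmwhm}.
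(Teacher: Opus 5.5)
Your proposal is correct and follows the paper's proof: the \GMS{} property makes $X$ \sssmz{}, Theorem~\ref{hmwhm}(i) then gives (i), and (ii) is derived from (i). Your additional details are valid fillings of steps the paper leaves implicit, namely the lower bound $\hdim(XN)\geq\hdim N$ via the isometric translate $x_0N$ (which requires $X\neq\emptyset$), and replacing each $E_n$ by its compact closure.
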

\begin{proof}
This follows immediately from Theorem~\ref{hmwhm}.
\end{proof}
\begin{coro}\label{null3}
Let $X\subs\Rset^2$. The following are equivalent.
\begin{enum}
\item $X$ is null-additive,
\item all orthogonal projections of $X$ on lines are null-additive,
\item at least two orthogonal projections of $X$ on lines are null-additive.
\end{enum}
\end{coro}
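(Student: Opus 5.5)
The plan is to prove (i)\Implies(ii)\Implies(iii)\Implies(i). The implication (ii)\Implies(iii) is trivial.

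For (i)\Implies(ii), take a line $L$ and let $\pi_L\colon\Rset^2\to L$ be the orthogonal projection. Then $\pi_L$ is $1$-Lipschitz and a group homomorphism from $\Rset^2$ onto $L\cong\Rset$. By Theorem~\ref{euclid}, $\Rset^2$ is a \GMS{} group, so the null-additive set $X$ is \sssmz{}. Proposition~\ref{unifcont} then makes $\pi_L(X)$ \sssmz{} in $L\cong\Rset$, and Theorem~\ref{main1} (or Theorem~\ref{mainNN}) shows it is null-additive. Alternatively, one can quote Corollary~\ref{null1}(ii) directly with $f=\pi_L$.

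The main step is (iii)\Implies(i). Suppose the projections of $X$ onto two distinct lines $L_1,L_2$ are null-additive. Let $u_1,u_2$ be unit vectors spanning $L_1,L_2$. The map
\[
  T\colon\Rset^2\to\Rset^2,\qquad T(z)=(\langle z,u_1\rangle,\langle z,u_2\rangle)
\]
is a linear isomorphism, since $u_1,u_2$ are linearly independent. Hence $T$ and $T^{-1}$ are bi-Lipschitz group automorphisms. Identifying $L_i$ with $\Rset$ via $u_i$, the coordinate projections of $T(X)$ are $X_1=\pi_{L_1}(X)$ and $X_2=\pi_{L_2}(X)$, and both are null-additive subsets of $\Rset$. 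By Theorem~\ref{mainNN} both are \sssmz{}. Theorem~\ref{products}(i) then makes $X_1\times X_2$ \sssmz{} in $\Rset^2$, taken with the maximum metric, which is bi-Lipschitz equivalent to the Euclidean one. Since $T(X)\subs X_1\times X_2$, the set $T(X)$ is \sssmz{}.

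Next, $X=T^{-1}(T(X))$ is \sssmz{} by Proposition~\ref{unifcont}, because $T^{-1}$ is Lipschitz. Finally, Theorem~\ref{main1} applied in $\Rset^2$ yields that $X$ is null-additive. Equivalently, one could avoid \sssmz{} at the last stage by noting that a linear automorphism preserves Lebesgue null sets, so null-additivity transfers along $T^{-1}$. Either route works.

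The only delicate point is making sure the changes of metric are harmless. The \sssmz{} property is preserved under bi-Lipschitz maps by Proposition~\ref{unifcont}, and the maximum and Euclidean metrics on $\Rset^2$ are bi-Lipschitz equivalent. Beyond that, the argument is the projection trick already used in the proof of Theorem~\ref{main2}, transported through the linear change of coordinates $T$.
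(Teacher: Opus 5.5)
Your proof is correct and is essentially the paper's: (i)$\Rightarrow$(ii) passes through \sssmz{} (via Theorem~\ref{euclid}) and the Lipschitz projections, as the paper intends (its ``\ssmz{}'' there is a typo for \sssmz{}), and (iii)$\Rightarrow$(i) reduces to the coordinate projections and uses that a subset of their product is \sssmz{}, hence null-additive. Your linear change of coordinates $T$ just makes explicit the paper's ``we may assume the two projections are on the coordinate lines,'' and your chain Theorem~\ref{mainNN}, Theorem~\ref{products}(i), Theorem~\ref{main1} is exactly what stands behind the paper's appeal to Corollary~\ref{null1}(i).
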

\begin{proof}
(i)\Implies(ii) Since $X$ is null-additive, it is \ssmz. Therefore projections,
being uniformly continuous, are \sssmz, hence null-additive.

(iii)\Implies(i) We may assume that the two projections are on the coordinate lines.
Their product is null-additive by Corollary~\ref{null1}.
Hence $X$, being a subset of their product, is null-additive.
\end{proof}

\section{Remarks and questions}\label{sec:rem}
\subsection*{\GMS{} groups}
The paramount question is, of course, if Theorem~\ref{main1} has a converse.
\begin{question}
Which locally compact Polish groups are \GMS{}? 
In particular, is every null-additive set in a locally compact Polish group \sssmz{}?
\end{question}

\subsection*{$\EE$-additive sets}

We look at the ideal of subsets of a locally compact Polish group $\grG$ generated by \si compact
sets of Haar measure zero. It is denoted by $\EE(\grG)$. We also consider the ideal of meager
subsets of $\grG$ denoted by $\MM(\grG)$. In case of $\grG=\Cset$ we write $\EE$ and $\MM$.
\begin{defn}
If $\mc J$ is any of the discussed ideals, a set $X$ in a locally compact group $\grG$
is called \emph{$\mc J$-additive} if $XJ\in\mc J$ for each $J\in\mc J$ It is denoted
$\mc J\mkern-4mu\mc A$. Meager-additive means $\MM$-additive.
\end{defn}
It was proved in \cite{MR3946667} that $\MA=\EA$. Though we are not able to extend this
equivalence to more general groups, we can utilize techniques used in the proofs of
Theorems~\ref{hmwhm} and~\ref{main1} to prove that
in a locally compact Polish group admitting an invariant metric
every $\MM$-additive set is $\EE$-additive, answering thus~\cite[Question 8.4(i)]{MR4472524}. 
We will need a lemma on weighted Haudorff measure on compact sets.
\begin{lem}\label{wcpt}
If $E$ is compact and $\whm^\phi(E)<s$, then for any $\del>0$ there is a finite weighted
$\del$-cover $\{(E_i,w_i)\}$ such that $\sum_iw_i\phi(\diam E_i)<s$.
\end{lem}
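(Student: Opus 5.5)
The plan is a compactness argument that turns a countable weighted cover into a finite one. The one difficulty is that compactness needs open sets, while the sets in a weighted cover are arbitrary. So I would first fatten each set slightly into an open set, paying only a small amount thanks to the right-continuity of $\phi$. Then I would use the multiplicity condition to extract finitely many of them.

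Step 1 (a good countable cover). Since $\whm^\phi(E)<s$, we have $\whm^\phi_{\del/2}(E)<s$. Pick a weighted $\del/2$-cover $\{(E_i,w_i):i\in\Nset\}$ of $E$ and $\eta>0$ with $\sum_i w_i\phi(\diam E_i)<s-2\eta$. We may take each $E_i$ nonempty.

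Step 2 (fatten to open sets). Set $t=s/(s-\eta)>1$ and rescale the weights to $w_i'=w_i\cdot t$. The rescaled family still has multiplicity at least $t>1$ at every point of $E$, and
\[
  \sum_i w_i'\phi(\diam E_i)<t(s-2\eta)<s-\eta .
\]
By right-continuity of $\phi$, choose $\rho_i>0$ with $2\rho_i$ so small that the following hold:
\begin{itemize}
  \item $\diam E_i+2\rho_i\leq\del$;
  \item $\phi(\diam E_i+2\rho_i)<\phi(\diam E_i)+\eta 2^{-i-1}/w_i'$.
\end{itemize}
Let $U_i$ be the open $\rho_i$-neighbourhood of $E_i$. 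Then $\diam U_i\leq\diam E_i+2\rho_i\leq\del$. Moreover $\sum_i w_i'\phi(\diam U_i)<s$, and $\{(U_i,w_i')\}$ is a weighted $\del$-cover with multiplicity greater than $1$ at each point of $E$.

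Step 3 (compactness). This is the key step. For each finite $n$, let
\[
  O_n=\Bigl\{x:\sum\{w_i':i<n,\ x\in U_i\}>1\Bigr\}.
\]
Each $O_n$ is open, because the function $x\mapsto\sum_{i<n}w_i'\mathbf 1_{U_i}(x)$ is lower semicontinuous, being a finite sum of indicators of open sets with positive coefficients. The sets $O_n$ increase with $n$. Since the full multiplicity $\sum_i w_i'\mathbf 1_{U_i}(x)$ is greater than $1$ for $x\in E$, some partial sum already exceeds $1$, so the $O_n$ cover $E$. By compactness, $E\subs O_n$ for a single $n$.

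Step 4 (conclusion). The finite family $\{(U_i,w_i'):i<n\}$ is a weighted $\del$-cover of $E$, and
\[
  \sum_{i<n}w_i'\phi(\diam U_i)<s .
\]
This is the required finite cover.

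The main obstacle is exactly Step 3. Multiplicity at least $1$ is not an open condition, which is why Step 2 builds in a strict margin $t>1$ before compactness is applied. If finitely many weights sum to a number greater than $1$, the points where that happens form an open set only when the sets are open, which is why the fattening is needed. The rest is bookkeeping of the two $\eta$-errors.
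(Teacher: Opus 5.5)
Your proof is correct and is essentially the paper's argument: enlarge the sets to open neighbourhoods and scale up the weights, so you get an open weighted $\del$-cover whose multiplicity exceeds $1$ on $E$ while its cost stays below $s$, then use compactness to pass to a finite subfamily. The only difference is cosmetic. You extract the finite subfamily through the increasing open sets $O_n$ built from initial segments of the enumeration. The paper instead picks, for each $x\in E$, a finite $\AAA_x$ with $\mult(\AAA_x,x)\geq1$ and covers $E$ by the open sets $U_x=\bigcap\{A:(A,w)\in\AAA_x\}$. You also supply the explicit $\eta$-bookkeeping that the paper leaves as ``inflating slightly''.
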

\begin{proof}
Let $\eps<\del$ and let $\EEE$ be a weighted $\eps$-cover such that
$\sum_{(E,w)\in\EEE}w\phi(\diam E)<s$. Inflating slightly the sets and the weights yields
an open weighted $\del$-cover $\AAA$ such that $\sum_{(E,w)\in\AAA}w\phi(\diam E)$
is still below $s$ and the multiplicities of $\AAA$ are strictly more than $1$
at each point of $E$.
This lets us choose for each $x\in E$ a finite family $\AAA_x\subs\AAA$
such that $\mult(\AAA_x,x)\geq1$.
Set $U_x=\bigcap\{E:(E,w)\in\AAA_x\}$.
The family $\{U_x:x\in E\}$ is obviously an open cover of $E$ and hence there is
a finite set $F\subs E$ such that $\{U_x:x\in F\}$ is a cover of $E$.
Therefore the family $\{(E,w)\in\AAA:E\in \AAA_x,x\in F\}$ is the weighted $\del$-cover
of $E$ we are looking for.
\end{proof}
\begin{thm}\label{MMEE}
Let $\grG$ be a locally compact Polish group admitting an invariant metric.
Every $\MM$-additive set in $\grG$ is $\EE$-additive.
\end{thm}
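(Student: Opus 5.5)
The plan is to reduce to compact null sets, cover the translates blockwise using the combinatorial form of sharp measure zero, and bound Haar measure through the Bandt gauge $\phi$ from the proof of Theorem~\ref{main1}.

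\textbf{Reductions and tools.} Let $X$ be $\MM$-additive. By~\cite{MR4472524}, meager-additivity in such a group is equivalent to \ssmz{}. I will use the $\gamma$-cover form of \ssmz{}, parallel to Theorem~\ref{thm:sssmz}(ii) but with no bound on $\abs{I_k}$. In that form, for every $\Seqeps$ there are sets $E_n$ with $\diam E_n\leq\eps_n$ and a partition of $\Nset$ into consecutive intervals $I_k$ such that $\forall x\in X\ \fmany k\ \exists n\in I_k\ x\in E_n$. Replacing $E_n$ by its closure, we may assume each $E_n$ is closed. Since $\EE(\grG)$ is generated by compact null sets, it suffices to show $XK\in\EE(\grG)$ for every compact $K$ with $\mu_\grG(K)=0$.

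Fix an invariant metric $d$ with the Struble property, and let $\phi$ be the Bandt gauge~\eqref{bandt1}, so that $\whm^\phi=\mu_\grG$ by~\eqref{bandt2}. Two elementary facts will be used:
\begin{itemize}
\item[(1)] by invariance of $d$, $\diam(CA)\leq\diam C+\diam A$;
\item[(2)] if $\{(A_i,w_i)\}$ is a finite weighted cover of a measurable set $S$, then
\[
  \mu_\grG(S)\leq\int\mult\,d\mu\leq\sum_i w_i\mu_\grG(A_i)\leq\sum_i w_i\phi(\diam A_i),
\]
the last step by the definition of $\phi$.
\end{itemize}

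\textbf{Construction.} Since $\whm^\phi(K)=0$, Lemma~\ref{wcpt} gives for each $n$ a finite weighted cover $\AAA_n$ of $K$ with $\sum_{(A,w)\in\AAA_n}w\phi(\diam A)<2^{-n}$. Because $\AAA_n$ is finite and $\phi$ is right-continuous, I can choose $\eps_n>0$ such that
\[
  \sum_{(A,w)\in\AAA_n}w\,\phi(\diam A+\eps_n)<2^{1-n}.
\]
Now apply the \ssmz{} characterization to this $\seq{\eps_n}$ to get closed sets $E_n$ and intervals $I_k$ as above.

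\textbf{Estimate for each $E_nK$.} The family $\{(E_nA,w):(A,w)\in\AAA_n\}$ is a finite weighted cover of $E_nK$. The set $E_nK$ is closed, being a closed set times a compact set. By (1) and (2),
\[
  \mu_\grG(E_nK)<2^{1-n}.
\]

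\textbf{Conclusion.} Let $Y_m=\bigcap_{k\geq m}\bigcup_{n\in I_k}E_n$. By the choice of the $E_n$, $X\subs\bigcup_m Y_m$. Moreover
\[
  Y_mK\subs\bigcap_{k\geq m}\bigcup_{n\in I_k}E_nK,
\]
and this set is closed. Its measure is at most $\sum_{n\geq\min I_k}2^{1-n}$ for every $k\geq m$, and this tends to $0$ as $k\to\infty$, so the set is null. A closed null set in a $\sigma$-compact locally compact group lies in $\EE(\grG)$. Hence $XK\subs\bigcup_m Y_mK$ is in $\EE(\grG)$.

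\textbf{Main obstacle.} The delicate point is the precise combinatorial characterization of \ssmz{} (equivalently, meager-additivity) from~\cite{MR4472524}. I need it in exactly the interval form with arbitrary finite blocks $I_k$ and diameters $\eps_n$ indexed by $n$ rather than by $k$, because the measure estimate must be summable over $n$ within each block. If the cited form indexes diameters by blocks instead, I would first pass to $\eps'_k=\min_{n\leq\max I_k}\eps_n$ by a reindexing argument like the one used in the proof of Theorem~\ref{thm:sssmz}.
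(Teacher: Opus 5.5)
One step fails as written: the choice of metric. Your proof needs a single metric $d$ with two properties at once. It must be two-sided invariant, because fact~(1) uses right-invariance in the form $d(ca,c'a)=d(c,c')$. It must also be proper in Struble's sense, so that you can quote \eqref{bandt2} and know that $\phi$ is a finite, right-continuous gauge. Struble's theorem only gives a \emph{left}-invariant proper metric, and a proper two-sided invariant metric need not exist. If $d$ is two-sided invariant, then $d(1_\grG,hgh^{-1})=d(1_\grG,g)$, so balls about $1_\grG$ are conjugation-invariant. Now take $F_2\times\Cset$, with $F_2$ the discrete free group on $a,b$; it is a locally compact Polish group with an invariant metric. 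Any ball about $1_\grG$ containing $(a,0)$ contains the infinite closed discrete set $\{(hah^{-1},0):h\in F_2\}$, so it is not compact.

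The repair is what the paper does:
\begin{enumerate}
\item Use a left-invariant Struble metric, and make the sets in $\AAA_n$ compact by taking closures.
\item Replace~(1) by Claim~\ref{Claim}. It uses only left-invariance and uniform continuity of conjugation on a compact set, and gives $\del_n>0$ with $\diam C\leq\del_n\Rightarrow\diam(CA)\leq\diam A+\eps_n$ for every $(A,w)\in\AAA_n$.
\item Apply the \ssmz{} characterization to $\seq{\del_n}$ instead of $\seq{\eps_n}$. This is legitimate because all compatible left-invariant metrics induce the same left uniformity, so \ssmz{} does not depend on which one you use.
\end{enumerate}

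With that change your proof is correct and is essentially the paper's. It uses the same Bandt gauge, Lemma~\ref{wcpt}, the choice of $\eps_n$ by right-continuity, and the groupable form of \ssmz{} from \cite[Theorem 5.1]{MR4472524}. The paper quotes that characterization exactly in the form you want: diameters indexed by $n$, and a partition of $\Nset$ into finite sets $D_j$. Your estimate only uses $\min D_j\to\infty$, so intervals are not even needed. Your ``main obstacle'' therefore does not arise. That is fortunate, because your fallback would not work: with no bound on $\abs{I_k}$, the block-indexed condition holds for every \si totally bounded set, and $\eps'_k$ cannot be chosen before the $I_k$ are known.

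The one real difference is the ending. You bound $\mu_\grG(E_nK)<2^{1-n}$ directly from $\mu_\grG(A)\leq\phi(\diam A)$. You then exhibit the closed null sets $\bigcap_{k\geq m}\bigcup_{n\in I_k}E_nK\supseteq Y_mK$, using only the trivial inequality $\mu_\grG\leq\whm^\phi$ and the fact that a closed set times a compact set is closed. The paper instead forms the large weighted cover $\{(C_nA_n^k,w_n^k)\}$ of the \si compact set $\wh XE$, and goes through Lemma~\ref{largweight} and \eqref{bandt2} a second time. Your tail estimate is slightly more economical and does not need the $C_n$ to be compact.
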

\begin{proof}
Let $E\in\EE$. We may assume that $E$ is compact.
Let $d$ be a Strobel metric on $\grG$. Let $\phi$ be the gauge given by~\eqref{bandt1}.
By~\eqref{bandt2} $\whm^\phi(E)=0$. By Lemma~\ref{wcpt} there is, for each $n\in\Nset$,
a finite weighted cover $\AAA_n=\{(A_n^k,w_n^k):k\in I_n\}$ of $E$ such that
$\sum_{k\in I_n}w_n^k\phi(\diam A_n^k)<2^{-n}$. We may assume that all sets $A_n^k$ are compact.
For each $n$ find $\eps_n>0$ such that for all $k\in I_n$
\[
  \phi(\diam A^k_n+\eps_n)<\phi(\diam A^k_n)+\frac{2^{-n}}{w^k_n\abs{I_n}}.
\]
It is possible because $\phi$ is right-continuous and $I_n$ is finite.
Then use Claim~\ref{Claim} to get a sequence of $\del_n>0$ such that for all $n\in\Nset$
and $k\in I_n$
\[
  \diam C\leq\del_n\Rightarrow \diam(CA^k_n)\leq\diam A^k_n+\eps_n.
\]
Since $X$ is $\MM$-additive, it has by~\cite[Theorem 5.1]{MR4472524} \emph{sharp measure zero}, 
which means the following: for any sequence $\seq{\del_n:n\in\Nset}$ of positive reals
there is a sequence of sets $\seq{C_n:n\in\Nset}$ such that $\forall n\ \diam C_n<\del_n$ 
(and hence Strobel metric guarantees that $C_n$'s are compact)
with the following property: there is a partition $\{D_j:j\in\Nset\}$ of $\Nset$ into finite sets
such that $\forall x\in X\ \fmany j\ \exists n\in I_j\ x\in C_n$.
The set 
\begin{equation}\label{Xhat}
  \wh X=\{x\in\grG:\fmany j\ \exists n\in D_j\ x\in C_n\}
\end{equation}
is clearly \si compact and $X\subs\wh X$. Consider the family
\[
  \AAA=\{(C_nA_n^k,w_n^k):n\in\omega,k\in I_n\}.
\]
Let $z\in \wh XE$. There are $x\in\wh X$, $y\in E$ such that $z=xy$.
As to $x$, by \eqref{Xhat} there is an infinite set $J\subs\Nset$ such that 
$\forall n\in J\ x\in C_n$.
We also have $\forall n\in J\ \mult(\AAA_n,y)\geq1$, hence
\[
  \mult(\AAA,z)\geq\sum_{n\in J}\mult(\AAA_n,y)\geq\sum_{n\in J}1=\abs J=\infty,
\]
so $\AAA$ is a large cover of $\wh XE$.
Moreover
\begin{align*}
  \sum_{(A,c)\in\AAA}c\phi(\diam A)
  &\leq\sum_{n\in\Nset}\sum_{k\in I_n}c_n^k \phi(\diam(C_nA^k_n))\\
  &\leq\sum_{n\in\Nset}\sum_{k\in I_n}c^k_n\Bigl(\phi(\diam A^k_n)+\frac{2^{-n}}
    {c^k_n\abs{I_n}}\Bigr)\\
  &\leq\sum_{n\in\Nset}\Bigl(\sum_{k\in I_n}c^k_n\phi(\diam A^k_n)+2^{-n}\Bigr)
  \leq\sum_{n\in\Nset}2\cdot2^{-n}<\infty
\end{align*}
so Lemma~\ref{largweight} yields $\whm^\phi(\wh XE)=0$. Since $\wh XE$, being
a continuous image of the product $\wh X\times E$ of \si compact sets, is \si compact,
it follows that $XE\subs\wh XE\in\EE$.
\end{proof}
The other inclusion remains a mystery, except for $\Cset$ and Euclidean spaces, cf.~\cite{MR3946667}.
\begin{question}[{\cite[Question 8.4(ii)]{MR4472524}}]
Is every $\EE$-additive set is a locally compact group $\MM$-additive?
\end{question}

\subsection*{Sharply additive sets}
A set $X\subs\grG$ in a locally compact group is called \emph{sharply $\MM$-additive} if 
for every meager set $M\subs\grG$ there is a \si compact set $C\supseteq X$ such that $CM$ is meager.
This notion comes from~\cite{MR4472524} where it was proved that a set in a locally compact group is
sharply $\MM$-additive if and only if it is \ssmz{} and also that 
a set in a locally compact group that admits an invariant metric is sharply $\MM$-additive
if and only if it is $\MM$-additive.

The notions of \emph{sharply $\NN$-additive} and \emph{sharply $\EE$-additive} are defined likewise.
The proofs of Theorems~\ref{main1} and ~\ref{MMEE} actually show that
for a set $X\subs\grG$ in a locally compact Polish group,
\begin{itemyze}
\item If $X$ is \sssmz{}, then $X$ is sharply $\NN$-additive,
\item If $X$ is \ssmz{}, then $X$ is sharply $\EE$-additive.
\end{itemyze}
It follows, e.g.,  that if every sharply $\NN$-additive set in $\grG$ is \sssmz{}, then $\grG$ is \GMS{}.
\begin{questions}
\begin{enum}
\item Is every $\NN$-additive set in a locally compact Polish group sharply $\NN$-additive?
\item Is every $\EE$-additive set in a locally compact Polish group sharply $\EE$-additive?
\end{enum}
\end{questions}
\subsection*{Uniformities and additivities of \sssmz{} and null-additive sets}
We mention results regarding cardinal invariants of strict measure zero and null-additive sets from~\cite{invariants2}.
Recall that the \emph{uniformity} and \emph{additivity} of an ideal $\mc J$ of sets are,
respectively, the cardinal numbers
\begin{align*}
  \non\mc J&=\min\{\abs A:A\subs X\land A\notin \mc J\}, \\
  \add\mc J&=\min\{\abs{\mc A}:\mc A\subs\mc J\land\bigcup\mc A\notin \mc J\}.
\end{align*}
Note that $\non\NA$ is often referred to as transitive additivity of $\NN$, and likewise $\non\MA$.
\begin{thm}[{\cite{invariants2}}] Let $X$ be an uncountable analytic metric space.
\begin{enum}
\item If $X$ \si totally bounded, then $\add\sssmz(X)=\non\sssmz(X)=\non\NA$.
\item If $X$ contains a uniform copy of $\Pset$, then $\add\sssmz(X)=\non\sssmz(X)=\addN$.
\end{enum}
\end{thm}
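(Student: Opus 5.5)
The statement is cited from~\cite{invariants2}, so what follows is a plan for reproducing it. For both items the inequality $\add\sssmz(X)\leq\non\sssmz(X)$ is trivial, because $\sssmz(X)$ contains all singletons. So the task is to find a lower bound for $\add$ and an upper bound for $\non$ that both equal the stated cardinal.

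\emph{Item (i), the lower bound.} I would use the $\gamma$-cover characterization in Theorem~\ref{thm:sssmz}(iii). Since $X$ is \si totally bounded, I reduce to $X$ totally bounded and code it into $\Cset$. Fix a summable-rate sequence of finite nets, so that points of $X$ become branches of a finitely branching tree. A choice of the sets $F_n$ with $\abs{F_n}\leq n$ then becomes a ``slalom'' $H_n$ of size at most $n$ on the finite levels. This is exactly the combinatorics of Shelah's characterization used in the proof of Theorem~\ref{mainNN}. Given fewer than $\non\NA$ many \sssmz{} sets, I would pick, for a fixed $\seqeps$, one witnessing sequence of slaloms for each set. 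I would then encode these witnesses as points of $\Cset$, so that there are fewer than $\non\NA$ many of them, and this family is null-additive. Applying Shelah's theorem to that family should produce a single slalom sequence that captures all of them from some point on. From it one reads off one $\gamma$-cover working for the union.

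\emph{Item (i), the upper bound.} Take a non-null-additive set $Y\subs\Cset$ of size $\non\NA$. By the characterization in Theorem~\ref{mainNN}, $Y$ is not \sssmz. Then transfer it into $X$: an uncountable analytic $X$ contains a copy of $\Cset$ by a Lipschitz/uniform embedding, or at least a Cantor set $K$ with a uniformly continuous bijection onto $\Cset$ whose inverse does not increase covering numbers too much. I would use Proposition~\ref{unifcont} in the right direction, namely that the image of $Y$ in $X$ maps uniformly continuously onto $Y$, so it cannot be \sssmz. This gives $\non\sssmz(X)\leq\non\NA$.

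\emph{Item (ii).} If $X$ contains a uniform copy of $\Pset$, then $X$ is not \si totally bounded. The upper bound $\non\sssmz(X)\leq\addN$ should come from the combinatorial characterization $\addN=$ the least size of a family in $\Pset$ not localized by a single slalom of width $n$ (Bartoszy\'nski). I would take such a family of size $\addN$ inside the uniform copy of $\Pset$. Any \sssmz{} set is \si totally bounded, and a totally bounded subset of $\Pset$ has bounded coordinates. Applying Theorem~\ref{thm:sssmz}(iii) with $\eps_n=2^{-n}$ would then produce a localizing slalom, which is a contradiction. For the lower bound $\add\sssmz\geq\addN$, given fewer than $\addN$ \sssmz{} sets and a gauge $\phi$, I would pick for each set the sequence $n\mapsto$ (data from Theorem~\ref{thm:sssmz}(iii)) as an element of $\Pset$. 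Localizing all of them by one slalom of width $n$ and taking the union of at most $n$ candidate finite sets at each level gives sets $F_n$ of size about $n^2$. Reindexing as in (iii)\Implies(iv) fixes the width.

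\emph{Main obstacle.} The hardest step will be the lower bound in (i): making precise that the coded witness families of fewer than $\non\NA$ sets can be uniformly localized. This needs Shelah's theorem in its ``transitive additivity'' form, and I would try to derive it from the characterization $\non\NA=$ the least size of a set not localized by small slaloms on a fixed interval partition.
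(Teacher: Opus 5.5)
The paper does not prove this theorem; it only quotes it from \cite{invariants2}, so there is no argument in the text to compare yours with. Judged on its own merits, your plan is correct. The inequality $\add\sssmz(X)\leq\non\sssmz(X)$ is trivial. For the upper bound in (i), apply Theorem~\ref{mainNN} and Proposition~\ref{unifcont} to a copy $K\subs X$ of $\Cset$ supplied by Souslin's perfect set theorem. A homeomorphism between compact metric spaces is uniformly continuous in both directions, so no Lipschitz control is needed. The upper bound in (ii) follows from Bartoszy\'nski's slalom characterization of $\addN$ together with Proposition~\ref{unifcont}. Both lower bounds come from localizing coded witnesses of Theorem~\ref{thm:sssmz}(iii).

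A few points need tightening.

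\emph{What gets coded.} You only need to code the witnesses, not the points of $X$. In (i), replace each point of $F^\alpha_n$ by a point of a fixed finite $\eps_n$-net $G_n$, at the cost of doubling the radius. The witness then lies in $\prod_n[G_n]^{\leq n}$, which you code into $\Cset$ along an interval partition with $2^{f(n+1)-f(n)}\geq\abs{[G_n]^{\leq n}}$.

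\emph{Separability in (ii).} In the lower bound of (ii), the same move with a fixed countable dense set turns a witness into an element of a copy of $\Pset$. This uses separability of $X$, which holds because $X$ is analytic. You do not mention it, but it is indispensable. In an uncountable uniformly discrete space the \sssmz{} sets are exactly the countable sets, so $\add\sssmz=\omega_1$, which is below $\addN$ in some models.

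\emph{Your ``main obstacle''.} This step is immediate. The set of codes has size $<\non\NA$, so it is null-additive by the very definition of $\non\NA$. Shelah's theorem, as quoted in the proof of Theorem~\ref{mainNN}, then applies to it with whatever interval partition you chose.

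\emph{Width $n^2$.} In both lower bounds the localized candidates give sets of size $\leq n^2$, not $\leq n$. This is harmless: Theorem~\ref{thm:sssmz}(iv) allows any fixed $f$, here $f(k)=k^2$. Its sets $E_n$ also need not be centred inside the union, so no further adjustment is needed.

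\emph{Upper bound in (ii).} Your remarks on \si total boundedness and bounded coordinates are superfluous. In the Baire metric, $B(z,2^{-n})$ is the cone of $z\rest n$. So the sets $F_n$ from Theorem~\ref{thm:sssmz}(iii) directly give the slalom $S(n-1)=\{z(n-1):z\in F_n\}$, with $\abs{S(m)}\leq m+1$, which localizes the whole family.
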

\begin{coro}[{\cite{invariants2}}]
Let $\grG$ be a locally compact group. 
\begin{enum}
\item $\non\NA\leq\non\NA(\grG)$,
\item if $\grG$ is \GMS{}, then  $\non\NA(\grG)=\non\NA$.
\end{enum}
\end{coro}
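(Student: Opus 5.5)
For (i) I would show $\non\NA\leq\non\NA(\grG)$ by taking a set $Y\subs\grG$ that is not null-additive with $\abs Y=\non\NA(\grG)$ and producing from it a set of the same cardinality that is not \sssmz{}. Theorem~\ref{main1} says every \sssmz{} subset of $\grG$ is null-additive, so $Y$ is not \sssmz{}. Hence $\non\sssmz(\grG)\leq\non\NA(\grG)$.

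Next I compute $\non\sssmz(\grG)$ using the theorem just stated from~\cite{invariants2}. A locally compact Polish group with a Struble metric is \si compact, hence \si totally bounded and analytic. If $\grG$ is uncountable, part (i) of that theorem gives $\non\sssmz(\grG)=\non\NA$, and therefore $\non\NA\leq\non\NA(\grG)$. If $\grG$ is countable, hence discrete, Haar measure is counting measure. The only null set is then $\emptyset$, so every subset of $\grG$ is null-additive and $\non\NA(\grG)$ is undefined, i.e.\ $\infty$ by convention, and the inequality is trivial. I would note this degenerate case explicitly.

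For (ii), if $\grG$ is \GMS{} then $\NA(\grG)=\sssmz(\grG)$ as ideals. So $\non\NA(\grG)=\non\sssmz(\grG)=\non\NA$, again by part (i) of the cited theorem.

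The main obstacle is checking the hypotheses of the cited theorem, namely that $\grG$ is \si totally bounded and that the value of $\non\sssmz(\grG)$ does not depend on the metric. Both follow from the Struble metric and from the metric-independence proposition at the start of Section~\ref{sec:nadditive}, so the whole argument reduces to Theorem~\ref{main1} combined with the cited computation.
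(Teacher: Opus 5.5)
Your argument is correct and is the intended one. Theorem~\ref{main1} gives $\sssmz(\grG)\subs\NA(\grG)$, hence $\non\sssmz(\grG)\leq\non\NA(\grG)$; part (i) of the theorem from~\cite{invariants2} identifies $\non\sssmz(\grG)$ with $\non\NA$ because $\grG$ is \si compact; and (ii) follows since for a \GMS{} group the two ideals coincide. In (ii) you tacitly use that $\grG$ is uncountable, which the statement implicitly assumes: a countable (hence discrete) group is trivially \GMS{}, but there $\non\NA(\grG)$ is undefined, so (ii) fails in that case.
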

It follows that consistently \sssmz{} is not a topological invariant: Pawlikowski~\cite{MR776210}
has a model in which $\add\NN<\non\NA$. Choose a $X\subs\Pset$ that is not \sssmz{} and
$\abs X=\add\NN$. Let $e:\Pset\to\Rset$ be a homeomorphic embedding. 
Then $\abs{e(X)}<\non\NA$ and therefore $e(X)$ is \sssmz{} and homeomorphic to $X$.
\begin{question}
Is $\non\NA(\grG)=\non\NA$ for all locally compact Polish groups?
\end{question}
\subsection*{ $\gamma$-sets and strong $\gamma$-sets}
A family of open sets $\UU$ in a metric space $X$ is an $\omega$-cover if
for each finite set $F\subs X$ there is $U\in\UU$ such that $F\subs U$,
and an \emph{$n$-cover} if for each set $F\subs X$
of cardinality $n$ there is $U\in\UU$ such that $F\subs U$.

A set $X$ in a metric space is called a \emph{$\gamma$-set} if every $\omega$-cover of $X$ 
contains a $\gamma$-cover. The notion comes from~\cite{MR667661}. 
There are several models exhibiting consistency of a $\gamma$-set that is not  null-additive,
e.g., Szewczak and Weiss~\cite[Theorem 3.3]{MR4472532}. Conversely, $\mathfrak{p}<\add\NN$
yields a set that is null-additive, yet not a $\gamma$-set.

The situation differs when we strenghen $\gamma$ to strong $\gamma$: 
a set $X$ in a metric space is called a \emph{strong $\gamma$-set} if for any sequence
$\seq{\UU_n:n\in\Nset}$ where $\UU_n$ is an $n$-open cover of $X$ there is a $\gamma$-cover
$\seq{U_n:n\in\Nset}$ with $U_n\in\UU_n$. The notion comes from
Galvin and Miller~\cite{MR738943} (but see also \cite{MR2193331}). They show
that every strong $\gamma$-set of reals is null-additive. Their result follows immediately
from the following theorem.
\begin{thm}
\begin{enum}
\item Every strong $\gamma$-set in a metric space has \sssmz{}.
\item Every strong $\gamma$-set in a locally compact Polish group is null-additive.
\end{enum}
\end{thm}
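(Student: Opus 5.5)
For (i), I will verify condition (iii) of Theorem~\ref{thm:sssmz} directly from the strong $\gamma$ property. Let $X$ be a strong $\gamma$-set and fix $\Seqeps$. The natural cover to use at stage $n$ is
\[
  \UU_n=\{B^\circ(F,\eps_n):F\in[X]^{\leq n}\},
\]
where $B^\circ(F,\eps_n)$ is the open $\eps_n$-neighbourhood of $F$. This family is an $n$-cover: for any $G\subs X$ with $\abs G=n$ we may take $F=G$.

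Applying the definition of a strong $\gamma$-set yields a $\gamma$-cover $\seq{U_n:n\in\Nset}$ with $U_n\in\UU_n$. So $U_n=B^\circ(F_n,\eps_n)$ for some $F_n\in[X]^{\leq n}$. Since $B^\circ(F_n,\eps_n)\subs B(F_n,\eps_n)$, the sequence $\seq{B(F_n,\eps_n)}$ is also a $\gamma$-cover of $X$. This is exactly condition (iii) of Theorem~\ref{thm:sssmz}, so $X$ is \sssmz{}.

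There are two small technical points to check. First, for $n=0$ the family $\UU_0$ consists of sets built from the empty set. This is harmless: a $\gamma$-cover only constrains all but finitely many indices, so we may let the game start at $n=1$, or simply reindex. Second, the paper's notion of an $n$-cover requires open sets, and the neighbourhoods $B^\circ(F,\eps_n)$ are open, so $\UU_n$ qualifies. In case the definition of an $n$-cover also excludes $X\in\UU_n$, I would note that if some $U_n$ equals $X$ then $X$ is covered by $n$ balls. Condition (iii) only asks for sets of size at most $n$, so this case causes no problem.

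For (ii), combine (i) with Theorem~\ref{main1}: in a locally compact Polish group every \sssmz{} set is null-additive. The whole argument is essentially routine, and I expect no real obstacle. The only point needing care is the exact match between the convention $\abs{F_n}\leq n$ in Theorem~\ref{thm:sssmz}(iii) and the indexing of the $n$-covers in the definition of a strong $\gamma$-set, which can be handled by shifting indices. Shifting is allowed because $\gamma$-covers are insensitive to finitely many terms and because enlarging $n$ only weakens the cardinality constraint. The Galvin--Miller result mentioned before the theorem then follows as a special case for sets of reals.
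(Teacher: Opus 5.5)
Your proof is correct and follows the paper's argument. The paper also takes $\UU_n$ to be the $\eps_n$-neighbourhoods of $n$-point subsets of $X$, extracts a $\gamma$-cover via the strong $\gamma$ property, applies Theorem~\ref{thm:sssmz}(iii), and derives (ii) from Theorem~\ref{main1}. Your choice of open neighbourhoods $B^\circ(F,\eps_n)$ is slightly more careful than the paper's closed balls, since the definition of an $n$-cover requires open sets.
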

\begin{proof}
Let $X$ be a strong $\gamma$-set.
(i) For each $n\in\Nset$ let $\eps_n>0$ and $\UU_n=\{B(F,\eps_n):F\subs X\land\abs F=n\}$.
Since $\UU_n$ is a cover $[X]^n$, there is a $\gamma$-cover $\seq{U_n:n\in\Nset}$ with $U_n\in\UU_n$.
Apply Theorem~\ref{thm:sssmz}(iii).

(ii) follows from (i) and Theorem~\ref{main1}.
\end{proof}
Since, as shown above, there is consistently a $\gamma$-set that is not null-additive,
the converses of both (i) and (ii) fail.
\bibliographystyle{amsplain}
\bibliography{everything}
\end{document}